\newtheorem{thm}{Theorem} [section]
\newtheorem{lemma}[thm]{Lemma}
\newtheorem{defn}{Definition}[section]
\newtheorem{assmp}{Assumption}[section]
\numberwithin{equation}{section} 
\renewcommand{\geq}{\geqslant}
\renewcommand{\leq}{\leqslant}
\renewcommand{\ge}{\geqslant}
\newcommand{\citethm}[1]{Theorem \ref{#1}}
\newcommand{\citelem}[1]{Lemma \ref{#1}}
\newcommand{\opfont}{\mathbb}
\newcommand{\BE}[2][]{\ensuremath{\operatorname{\opfont{E}}^{#1}\!\left[#2\right]}}
\newcommand{\bp}{\ensuremath{\opfont{P}}}
\newcommand{\BF}{\ensuremath{\mathcal{F}}}
\newcommand{\R}{\ensuremath{\operatorname{\mathbb{R}}}}
\newcommand{\dd}{\ensuremath{\operatorname{d}\!}}
\newcommand{\dt}{\ensuremath{\operatorname{d}\! t}}
\newcommand{\ds}{\ensuremath{\operatorname{d}\! s}}
\newcommand{\dw}{\ensuremath{\operatorname{d}\! W}}
\newcommand{\idd}[1]{\ensuremath{\operatorname{\mathbf{1}}_{#1}}}
\newcommand{\barv}{\bar{v}}
\newcommand{\bart}{\bar{t}}
\newcommand{\barx}{\bar{x}}
\newcommand{\bary}{\bar{y}}
\newcommand{\barc}{\bar{c}}
\newcommand{\unc}{\underline{c}}
\newcommand{\widt}{\widetilde{t}}
\newcommand{\widx}{\widetilde{x}}
\newcommand{\widc}{\widetilde{c}}
\newcommand{\wids}{\widetilde{s}}
\newcommand{\widy}{\widetilde{y}}
\newcommand{\widd}{\widetilde{d}}
\newcommand{\nn}{\nonumber}
\newcommand{\setw}{\mathscr{W}}
\newcommand{\setg}{\mathscr{G}}
\newcommand{\setq}{\mathcal{Q}} 
\newcommand{\seta}{\mathcal{A}}
\newcommand{\setad}{\mathscr{C}}
\newcommand{\ep}{\varepsilon}
\newcommand{\sig}{\sigma} 
\newcommand{\setc}{\mathcal{C}} 
\newcommand{\rn}{\R^{N}}
\newcommand{\hatc}{\hat{c}}
\newcommand{\hatd}{\hat{d}}
\newcommand{\hatr}{\hat{r}}
\newcommand{\hats}{\hat{s}}
\newcommand{\hatt}{\hat{t}}
\newcommand{\hatp}{\hat{p}}
\newcommand{\hatq}{\hat{q}}
\newcommand{\hatu}{\hat{u}}
\newcommand{\hatv}{\hat{v}}
\newcommand{\hatx}{\hat{x}}
\newcommand{\hatX}{\widehat{X}}
\newcommand{\haty}{\hat{y}}
\newcommand{\trace}[1]{\operatorname{tr}\left(#1\right)}
\begin{document}
\title{Stochastic optimal self-path-dependent control: \\ A new type of variational inequality and its viscosity solution} 
\date{\today}
\author{Mingxin Guo\thanks{Department of Applied Mathematics, The Hong Kong Polytechnic University, Kowloon, Hong Kong, China.  Email: \url{mingxin.guo@connect.polyu.hk}
}
 \and Zuo Quan Xu\thanks{Department of Applied Mathematics, The Hong Kong Polytechnic University, Kowloon, Hong Kong, China.  Email: \url{maxu@polyu.edu.hk}
}}
\maketitle
\begin{abstract} 
In this paper, we explore a new class of stochastic control problems characterized by specific control constraints. Specifically, the admissible controls are subject to the ratcheting constraint, meaning they must be non-decreasing over time and are thus self-path-dependent. This type of problems is common in various practical applications, such as optimal consumption problems in financial engineering and optimal dividend payout problems in actuarial science.
Traditional stochastic control theory does not readily apply to these problems due to their unique self-path-dependent control feature. To tackle this challenge, we introduce a new class of Hamilton-Jacobi-Bellman (HJB) equations, which are variational inequalities concerning the derivative of a new spatial argument that represents the historical maximum control value. Under the standard Lipschitz continuity condition, we demonstrate that the value functions for these self-path-dependent control problems are the unique solutions to their corresponding HJB equations in the viscosity sense.
\bigskip\\
\textbf{Keywords}: Viscosity solution, HJB equation, ratcheting constraint, self-path-dependent control, stochastic control.
\end{abstract}
%

\section{Introduction}
Determining the payout strategy will not only influence the behaviors of shareholders, but will affect the financial health of a company as well, thus playing a fundamental role in risk management for companies. A central theme in this domain is the optimal dividend payout problem, with first contributions from \cite{de1957impostazione} and  \cite{gerber1969entscheidungskriterien}, where they found the optimal dividend payout policy with surplus process, establishing the foundations for further research. The problem has then been studied in many variants concerning the surplus process. For example,  \cite{gerber2006optimal} and \cite{albrecher2020optimal} considered the model with Possion process; \cite{asmussen1997controlled}, \cite{gerber2004optimal}, \cite{azcue2005optimal} dealt with   Brownian motion model; \cite{belhaj2010optimal} solved the jump-diffusion model and \cite{reppen2020optimal} settled the stochastic profitability model. Also there are some studies about variants concerning the constraint (for example \cite{KRS21} considered capital injections); \cite{avanzi2009strategies} provides an invaluable overview to the management strategies. 

In this paper, we diverge from traditional optimal payout problems, focusing instead on finding the optimal value function with self-path-dependent constraint where the dividend rate paid by company can never be reduced. In real life, dividend strategies are often designed to be non-decreasing over time to avoid sending negative signals to the market. Thus, we exclusively consider strategies where the rate remains above historical values, a condition known as the self-path-dependent constraint or ratcheting constraint. The conception of ratcheting constraint was first introduced in the context of lifetime consumption by \cite{Dybvig1995dusenberry}, where he built an infinite horizon lifetime portfolio selection model with consumption ratchet constraints, in which the agent's standard life does not allow for a decline. \cite{albrecher2018dividends}   solved a ratcheting constraint problem, who performed explicit calculations for a restricted ratcheting strategy, allowing a single increase in the dividend rate during the process's lifetime. This strategy was studied within both the Cramer-Lundberg model and its diffusion approximation, revealing surprising relationships between the optimal ratcheting level and the threshold level of unconstrained dividend strategies. Similar problems with wealth ratcheting have been explored by \cite{herve2006optimal} and \cite{romuald2008optimal}. Besides the foundational model proposed by \cite{Dybvig1995dusenberry}, the problem about the monotone of the rate has been studied by \cite{arun2012merton}, \cite{angoshtari2019optimal}, and \cite{deng2022optimal}, who allow the rate to decrease but no less than a fixed rate. Further exploration into similar optimal consumption problems has been conducted by \cite{jeon2018portfolio} and \cite{albrecher2022optimal}, who examined the horizon effect under ratcheting or path-dependent constraint within a finite time investment cycle. \cite{chen2015on} and \cite{deng2022optimal} considered the problem incorporating habit-formation, modeled as the exponentially weighted average of past consumption rates. For the dividend payout problem under both Brownian and compound Poisson surplus processes, \cite{albrecher2020optimal} and \cite{albrecher2022optimal} studied the optimal dividend problem with ratcheting constraints, defining admissible dividend policies as non-decreasing, right-continuous processes and they solve the problem in viscosity sense, which was proposed by \cite{azcue2014stochastic}. Recently, \cite{guan2024optimal1} and \cite{guan2023optimal2} studied the optimal dividend problem with ratcheting constraints using PDE arguments, successfully deriving an optimal feedback payout strategy. Also by using the techniques of viscosity solution, \cite{reppen2020optimal} investigated the optimal dividend problem with a stochastic profitability model.

From a technical point of view, the self-path-dependent problem is different from the traditional stochastic optimal problem. The related Hamilton-Jacobi-Bellman (HJB) equations for this new type of problems are variational inequalities with at least two arguments: a state argument (representing the surplus level) and a control argument (representing the historical maximum dividend payout rate). To the best of our knowledge, there seems no universal way to deal with such type of variational inequalities. Unlike classical variational inequalities with function constraints, such as the Black-Scholes PDE for American options, the new type of variational inequalities involve a gradient constraint on the value function with respect to (w.r.t.) the control argument.  Although similar HJB equations with gradient constraints appear in problems involving transaction costs (see \cite{dai2009finite}, and \cite{dai2010continuous}), they differ fundamentally. In transaction cost problems, gradient constraints apply to the state argument, whereas, in dividend ratcheting problems, they apply to the control argument, necessitating different methodologies. \cite{Dybvig1995dusenberry} also studied the ratcheting problem, where they used geometric Brownian motion as the underlying process and applied a logarithmic or power utility function to the consumption rate. However, they setup the model in the framework of Merton-type consumption problems. Although there are some apparent similarities, the current risk theory setup does not fall into the category of models examined in those studies, and the methods used for its analysis are quite distinct. Building on the extensive research in this area, we extend the problem to a general nonlinear setting. We eliminate the consideration of ruin time, and   fix the end time, leading to the analysis of a three-dimensional stochastic control problem, which is more intricate than those addressed in previous studies. 
Finding a strong solution to the new type of variational inequalities  is challenging. Instead of solving the PDE as in \cite{guan2023optimal2}, our resolution of this three-dimensional ratcheting problem culminates in establishing a viscosity solution and a comparison theorem, significantly contributing to the literature and expanding the scope of stochastic control theory.

The   remainder  of this paper  is organized as follows: Section \ref{PF} introduces the model and delves into the detailed intricacies of the problem. Section \ref{BR} provides foundational results on the properties of Lipschitz functions and presents the dynamic programming principle. Section \ref{VS} defines the HJB equation and corresponding viscosity solutions. We also prove the existence and uniqueness of HJB equation in viscosity sense.

\section{Problem Formulation}\label{PF}

We fix a probability space $(\Omega, \BF, \bp)$ throughout the paper. 
Let $W(\cdot)$ be a standard $m$-dimensional Brownian motion in the probability space and $\{\BF_t\}_{t\geq 0}$ be its natural filtration. Assume $\BF=\BF_T$, where $T$ is a fixed positive constant. 

We assume an $N$-dimensional controlled process $X(\cdot)$ follows the following stochastic differential equation:
\begin{align}\label{state1}
\dd X(s)=b(s,X(s),u(s))\dt+\sig(s,X(s),u(s))\dw_{s}, 
\end{align}
where $u(t)$ is the control at time $t$. 
The cost functional is given by 
\begin{align} \label{cost1}
J(t, x, u(\cdot)):=\BE[t,x]{\int_{t}^{T}f(s, X(s), u(s))\ds+H(X(T))},
~(t,x)\in [0,T]\times\rn,
\end{align}
where $\BE[t,x]{\:\cdot\:}$ means the conditional expectation given the initial condition $X(t)=x$. %
Our problem is to minimize the cost functional \eqref{cost1} over a special set of admissible controls, where the controlled process $X(\cdot)$ follows \eqref{state1}.

A distinctive characteristic of the control problem \eqref{cost1} we consider here is that the controls $u(\cdot)$ must be subject to the so-called ratcheting constraint, namely, they must be non-decreasing over time, rendering them inherently self-path-dependent as their future values are influenced by their historical values. 
This type of self-path-dependent control problems unfortunately falls outside the purview of classical control theory where the control constraint set at any time does not relay on their historical values (see, e.g., \cite{YZ99}), and has been rarely addressed in the existing literature. However, 
this type of controls are prevalent in numerous practical applications, such as in models for dividend payout strategies and consumption plans that incorporate drawdown constraints; see, e.g., \cite{albrecher2023optimal,guan2024optimal1,guan2023optimal2}.
But all these works assumed linear controlled processes and linear cost functionals. 
The aim of this paper is to establish a general result for a class of self-path-dependent control problems that satisfy the usual Lipchitz continuity conditions, thereby filling a gap in the field and providing a framework that can be applied to a variety of real-world scenarios where self-path-dependent controls play a critical role.

Mathematically speaking, for any two real constants $c_{1}<c_{2}$, not necessarily positive, 
we denoted by $\setad[c_{1}, c_{2}]$ the set of admissible controls $u(\cdot):[0,T]\to [c_{1}, c_{2}]$ which are $\{\BF_t\}_{0\leq t\leq T}$-adapted, right-continuous, and nondecreasing over time.  Throughout the paper, we fix a domain: $$\setq=[0,T]\times\rn\times(\unc,\barc],$$ 
and denote its closure by $$\overline\setq=[0,T]\times\rn\times [\unc,\barc],$$ 
where $\unc<\barc$ are two fixed real constants.  
Our controls are nondecreasing, so the value $\unc$ is not so important; 
we only need it to ensure our estimates hold uniformly. By contrast, the upper bound $\barc$ is more critical, as it makes the HJB equation associated with our problem is subject to a bounded domain w.r.t. the argument $c$. Practically speaking, any decision maker shall not be allowed to payout dividends at a higher rate than its income rate, thus the payout rate (controlled by the decision maker) is upper bounded by the income rate. Hence, putting an upper bound $\barc$ in our problem is reasonable. Without such an upper bound constraint, there are more technical issues to address, which is left to our future work.

Our goal is to solve the following stochastic optimal control problem: 
\begin{align}\label{p1}
V(t, x, c):=\inf_{u(\cdot)\in\setad[c,\barc]} J(t, x, u(\cdot)),~~ (t, x, c)\in \setq. 
\end{align}
Here $V$ is called the value function of the problem \eqref{p1}. If a control $u^{*}(\cdot)\in\setad[c,\barc]$ satisfies 
\begin{align}\label{p2}
J(t, x, u^{*}(\cdot))=V(t, x, c),
\end{align}
then it is called an optimal control to the problem \eqref{p1} for the initial position $(t,x,c)$. 

One can easily see from the definition \eqref{p1} that, as long as $c\in(\unc,\barc]$, the value function $V$ does not relay on the lower bound $\unc$ since the admissible set $\setad[c,\barc]$ does not. Moreover, the value function $V$ is non-decreasing (resp. decreasing) w.r.t. $c$ (resp. $\barc$) since the admissible set $\setad[c,\barc]$ is getter shrinking (resp. expanding) as $c$ (resp. $\barc$) increases.  

Regarding the control problem \eqref{p1}, following \cite{YZ99}, we impose the following standard assumption on its coefficients. 
\begin{assmp}\label{assumption}
It holds that 
\begin{enumerate}
\item $b:\overline\setq\to\rn$; ~~  $f: \overline\setq\to\R$; ~~  $\sig:\overline\setq\to\R^{N\times m}$;
\item
The functions $b$, $f$ and $\sig$ are right-continuous w.r.t. $t$; 
\item There exist positive constants $\kappa\in (0,1]$ and $K$ such that 
\begin{align*}
|\phi(t,x,c)-\phi(t,\hatx,\hatc)|&\leq K\big(|x-\hatx|+(1+|x|+|\hatx|)|c-\hatc|^{\kappa}\big) 
\end{align*}
holds for all $(t,x,c)$, $(t,x,\hatc)\in\setq$ and $\phi=b$, $f$ and $\sig$; namely, 
the functions $b$, $f$ and $\sig$ are Lipchitz continuous w.r.t. $x$ and locally H\"older continuous w.r.t. $c$.

\item The function $|\phi(\cdot, 0, \cdot)|$ is bounded on $[0,T]\times[\unc,\barc]$ for $\phi=b$, $f$ and $\sig$.
\end{enumerate}
\end{assmp}
Under this assumption, we see $|\phi(t,x,c)|\leq K(1+|x|)$ on $\overline\setq$ for $\phi=b$, $f$ and $\sig$.
Here and hereafter, we use $K$ to denote some positive constant that may be different at each occurrence  and may relay on $T$, $\unc$ and $\barc$, but it is always independent of the choice of $(t,x,c)\in\setq$.

Because the coefficients in the control problem \eqref{p1} are deterministic, we will study the problem via its HJB equation. We first study the properties of its value function.

\section{Basic Properties of the Value Function}\label{BR}
In this section, we give some basic properties for the value function $V$. Recall that the constraint set $\setad[c,\barc]$ is shrinking as $c$ increases, so the value function $V$ defined in \eqref{p1} is non-decreasing w.r.t. $c$. This property will be used frequently in the subsequent analysis without claim. 

The biggest difference between the HJB equation associated with our problem and those associated with classical ones defined in \cite{YZ99} is that a new special argument $c$ is introduced, which makes our HJB equation a variational inequality, rather than parabolic PDEs as in \cite{YZ99}. The new argument is needed to record the maximum historical control value, since our controls are self-path-dependent. 
The HJB equations associated with American options and optimal stopping problems are also variational inequalities, but our one is essential different from them. The former's gradient constraints in the HJB equations are w.r.t. the state argument $x$, by contrast, our one is against the control argument $c$. 
 
Thanks to the Lipchitz continuity condition in Assumption \ref{assumption}, we can get similar continuity properties on the $t$ and $x$ components. For the new variable $c$, we will prove that the value function is Lipchitz continuous w.r.t. it as well. 

We say a function $\Phi: \setq\to \R$ satisfies the standard estimates if there is a positive constant $K$ such that the following holds: 
\begin{align}\label{estimate1}
|\Phi(t,x,c)| &\leq K(1+|x|),\\
|\Phi(t,x,c)-\Phi(\hatt,\hatx,c)|&\leq K\big(|x-\hatx|+(1+|x|+|\hatx|)|t-\hatt|^{\frac{1}{2}}\big),\label{estimate2}\\
|\Phi(t,x,c)-\Phi(t,x,\hatc)| &\leq K(1+|x|)|c-\hatc|^\kappa,\label{estimate3}
\end{align}
for all $(t,x,c),(\hatt,\hatx,c), (t,x,\hatc)\in\setq$.

\begin{lemma}\label{bound of V}
Suppose Assumption \ref{assumption} holds.  
Then the value function $V$ defined in \eqref{p1} satisfies the standard estimates \eqref{estimate1}-\eqref{estimate3}.
\end{lemma}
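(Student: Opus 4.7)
The plan is to establish the three parts of the estimate in turn, using classical SDE moment/stability estimates for the first two and an explicit ratcheting coupling for the last. Throughout, I would use the uniform bound $|\phi(t,x,c)| \le K(1+|x|)$ on $\overline{\setq}$ for $\phi \in \{b, f, \sigma\}$ noted right after Assumption \ref{assumption}, and tacitly assume the terminal cost $H$ enjoys the same Lipschitz-in-$x$ and linear-growth properties (evidently needed for the lemma and not explicitly listed in Assumption \ref{assumption}).

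Estimate \eqref{estimate1} is obtained by applying a standard Gronwall argument to the SDE \eqref{state1} to get $\BE[t,x]{\sup_{s \in [t, T]} |X(s)|^2} \le K(1+|x|^2)$ uniformly in $u(\cdot) \in \setad[c, \barc]$; plugging this into \eqref{cost1} via the linear growth of $f$ and $H$ produces $|J(t, x, u)| \le K(1+|x|)$, and taking infimum over $u$ preserves the bound. Estimate \eqref{estimate2} splits into Lipschitz-in-$x$ and H\"older-$\frac{1}{2}$-in-$t$. The former uses a standard pathwise coupling: running two copies $X, \hatX$ of \eqref{state1} under the same $u(\cdot)$ from $x$ and $\hatx$ yields $\BE{\sup_s |X(s) - \hatX(s)|^2} \le K|x - \hatx|^2$, which Lipschitz continuity of $f$ and $H$ in $x$ converts to $|J(t, x, u) - J(t, \hatx, u)| \le K|x - \hatx|$. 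The latter (say $t < \hatt$) uses a two-sided admissibility translation: any $\hatu(\cdot) \in \setad[c, \barc]$ on $[\hatt, T]$ extends to an element of $\setad[c, \barc]$ on $[t, T]$ by freezing it at $c$ on $[t, \hatt)$, while any $u(\cdot) \in \setad[c, \barc]$ on $[t, T]$ restricts to an admissible control on $[\hatt, T]$; combining the elementary bound $\BE{|X(\hatt) - x|^2} \le K(1+|x|^2)|t - \hatt|$ with linear growth of $f$ and $H$ then gives the H\"older-$\frac{1}{2}$ estimate.

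The main obstacle is \eqref{estimate3}, where the admissibility class $\setad[c, \barc]$ itself changes with $c$, so one cannot simply reuse a single control on both sides. My proposed device is the \emph{ratcheting coupling}: for $c \le \hatc$ and any $u(\cdot) \in \setad[c, \barc]$, set $\hatu(s) := u(s) \vee \hatc$. This map preserves right-continuity and monotonicity, sends values into $[\hatc, \barc]$, hence $\hatu(\cdot) \in \setad[\hatc, \barc]$, and moreover $|u(s) - \hatu(s)| \le |c - \hatc|$ pointwise. Driving the two state processes under the same Brownian motion and using the Lipschitz-in-$x$/H\"older-in-$c$ bound in Assumption \ref{assumption}(3) gives
\begin{align*}
\BE{|X(r) - \hatX(r)|^2} \le K \int_t^r \BE{|X(s) - \hatX(s)|^2 + (1+|X(s)|+|\hatX(s)|)^2 |c - \hatc|^{2\kappa}} \ds,
\end{align*}
so Gronwall together with the earlier moment bound yields $\BE{\sup_r |X(r) - \hatX(r)|^2} \le K(1+|x|)^2 |c - \hatc|^{2\kappa}$. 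Passing this through the Lipschitz/H\"older estimates on $f$ and $H$ produces $J(t, x, \hatu) \le J(t, x, u) + K(1+|x|)|c - \hatc|^{\kappa}$; taking infimum over $u \in \setad[c, \barc]$ and combining with the obvious monotonicity $V(t, x, c) \le V(t, x, \hatc)$ yields \eqref{estimate3}. The delicate point is precisely that $u \vee \hatc$ lies exactly in the new admissibility class, which is what makes the standard SDE toolkit applicable despite the non-classical self-path-dependent structure.
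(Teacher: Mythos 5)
Your proposal is correct and follows essentially the same route as the paper. The central device you call the ``ratcheting coupling'' $\hatu(\cdot) = u(\cdot)\vee\hatc$ for \eqref{estimate3} is exactly the map the paper uses, with the same observations that it preserves admissibility, lands in $\setad[\hatc,\barc]$, and satisfies $|\hatu - u|\le |\hatc - c|$ pointwise; your Gronwall estimate on $\BE{|X - \hatX|^2}$ and the subsequent bound on $J$ mirror the paper's first-moment version. For \eqref{estimate1} and \eqref{estimate2} you sketch the standard Gronwall/$L^2$ stability estimates where the paper simply cites \cite[Theorem~1.6.16]{YZ99}; for \eqref{estimate2} the paper uses the same control $u(\cdot)$ at both initial points and notes that the infimum set $\setad[c,\barc]$ is unchanged, so your ``two-sided admissibility translation'' is a slightly more elaborate statement of the same fact but not a genuinely different argument. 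Your side remark that $H$ must be assumed Lipschitz with linear growth, though not explicitly listed in Assumption~\ref{assumption}, is a correct and worthwhile observation about an implicit hypothesis used in the paper's proof as well.
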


\begin{proof}
Fix any $(t,x,c)\in\setq$. 
For any $u(\cdot)\in\setad[c,\barc]$, by \cite[Theorem 1.6.16]{YZ99}, we have
\begin{align}\label{3.4}
\BE[t,x]{\sup_{s\in[t,T]}|X(s)| }\leq K(1+|x|).
\end{align}
Thus by Assumption \ref{assumption}, we have 
\begin{align*}
|V(t, x, c)|&\leq\sup_{u(\cdot)\in\setad[c,\barc]} |J(t, x, u(\cdot))|\\
&\leq \sup_{u(\cdot)\in\setad[\unc,\barc]}\BE[t,x]{\int_{t}^{T}|f(s, X(s), u(s))|\ds+|H(X(T))| }\\
&\leq K(1+|x|).
\end{align*}
Hence the first estimate \eqref{estimate1} for $V$ is established. Here we use the lower bound $\unc$ to get the uniform constant $K$.

Next, suppose $ \hatt\in(t,T]$ and $\hatx\in\rn $. For any $u(\cdot)\in\setad[c,\barc]$, let $X$ and $\hatX$ be the corresponding states driven by \eqref{state1} with the initial positions $(t,x)$ and $(\hatt,\hatx)$ respectively. Then also by \cite[Theorem 1.6.16]{YZ99}, we have 
\begin{align}
\BE{\sup_{s\in[\hatt,T]}|X(s)-\hatX(s)|}\leq K\big(|x-\hatx|+(1+|x|+|\hatx|)|t-\hatt|^{\frac{1}{2}}\big).
\end{align}
Together with Assumption \ref{assumption}, it follows that
\begin{align*} 
|J(\hatt,x,u(\cdot))-J(\hatt,\hatx,u(\cdot))| &\leq K|x-\hatx|,
\end{align*}
and 
\begin{align*}
&\quad\;|J(t,x,u(\cdot))-J(\hatt,x,u(\cdot))|\\
&\leq \BE{\Big|\int_t^T f(s,X(s),u(\cdot))\ds-\int_{\hatt}^{T} f(s,\hatX(s),u(\cdot))\ds\Big|}\\
&\leq \BE{\int_{t}^{\hatt} \big|f(s,X(s),u(\cdot))\big|\ds+\int_{\hatt}^{T}\big|f(s,X(s),u(\cdot))-f(s,\hatX(s),u(\cdot))\big|\ds}\\ 
&\leq K(1+|x|)|t-\hatt|+K(1+|x|+|\hatx|)|t-\hatt|^{\frac{1}{2}}\\
&\leq K(1+|x|+|\hatx|)|t-\hatt|^\frac{1}{2},
\end{align*}
where we used $|t-\hatt|\leq T$ to get the last inequality. 
Combining the above inequalities, we conclude that
\begin{align}\label{estimateofJ}
|J(t,x,u(\cdot))-J(\hatt,\hatx,u(\cdot))|\leq K\big(|x-\hatx|+(1+|x|+|\hatx|)|t-\hatt|^{\frac{1}{2}}\big).
\end{align}
Then the second estimate \eqref{estimate2} for $V$ follows from above estimate easily. 

For any $\hatc\in (c,\barc]$ and any control $u(\cdot)\in\setad[c,\barc]$, let $\hatu(\cdot)=u(\cdot)\vee\hatc$. Then $\hatu(\cdot)\in\setad[\hatc,\barc]$ and $|\hatu(\cdot)-u(\cdot)|\leq |\hatc-c|$.
Let $X$ and $\hatX$ be the corresponding state trajectories driven by \eqref{state1} under the controls $u(\cdot)$ and $\hatu(\cdot)$ with the initial position $(t,x)$. Then we have 
\begin{align*}
&X(t)=x+\int_t^T f(s,X(s),u(s))\ds+\int_t^T \sigma(s,X(s),u(s))\dw_{s}, \\
&\hatX(t)=x+\int_t^T f(s,\hatX(s),\hatu(s))\ds+\int_t^T \sigma(s,\hatX(s),\hatu(s))\dw_{s}. 
\end{align*}
The above implies that 
\begin{align*}
\BE{|X(t)-\hatX(t)|}\leq \BE{\int_t^T\big| f(s,X(s),u(s))- f(s,\hatX(s),\hatu(s)\big|\ds}. 
\end{align*}
Then under Assumption \ref{assumption}, it leads to
\begin{align*}
&\quad\; \BE{|X(t)-\hatX(t)|}\\
&\leq \BE{\int_t^T K|X(s)-\hatX(s)|+K\big(1+|X(t)|+|\hatX(t)|\big)|u(s)-\hatu(s)|^\kappa\ds}\\
&\leq K\int_t^T \BE{|X(s)-\hatX(s)|}\ds+K|\hatc-c|^\kappa\BE{\int_t^T\big(1+|X(s)|+|\hatX(s)|\big)\ds}\\
&\leq K\int_t^T \BE{|X(s)-\hatX(s)|}\ds+K(1+|x|)|\hatc-c|^\kappa,
\end{align*}
where we used \eqref{3.4} to get the last inequality.  
By Gronwall's inequality, the above leads to 
\begin{align}\label{estimatexx}
\BE{|X(t)-\hatX(t)|}\leq K(1+|x|)|\hatc-\barc|^\kappa e^{K(T-t)}\leq K(1+|x|)|\hatc-c|^\kappa.
\end{align}
Thanks to \eqref{estimatexx} and the Lipchitz continuity of the coefficients, it then follows
\begin{align*} 
&\quad\; |J(t,x,u(\cdot))-J(t,x,\hatu(\cdot))|\\
&\leq \BE{\Big|\int_t^T f(s,X(s),u(s))\ds-\int_{t}^{T} f(s,\hatX(s),\hatu(s))\ds\Big|}\\
&\leq \BE{\int_t^T \big|f(s,X(s),u(s))-f(s,\hatX(s),\hatu(s))\big|\ds}\\
&\leq \BE{\int_t^T K\Big( \big|X(s)-\hatX(s)\big|+ \big|u(s)-\hatu(s)\big|\Big)\ds}\\
&\leq K\int_t^T \Big( \BE{\big|X(s)-\hatX(s)\big|}+ |c-\hatc|\Big)\ds\\
&\leq K(1+|x|)|c-\hatc|^\kappa,
\end{align*}
where we used $|c-\hatc|\leq \barc-\unc$ to get the last inequality.
Now we conclude 
\begin{align} 
V(t,x,\hatc)\leq J(t,x,\hatu(\cdot))\leq K(1+|x|)|c-\hatc|^\kappa+J(t,x,u(\cdot)).
\end{align}
Since $u(\cdot)$ is arbitrarily chosen in $\setad[c,\barc]$, it yields 
\begin{align}
V(t,x,\hatc)\leq K(1+|x|)|c-\hatc|^\kappa+ V(t,x,c).
\end{align}
This, together with $V(t,x,c)\leq V(t,x,\hatc)$, implies the estimate \eqref{estimate3} for $V$ holds. 
\end{proof}

We now prove the following dynamic programming principle for the new control problem with self-path-dependent constraint.

\begin{thm}[Dynamic programming principle]\label{principle of optimality}
Assume Assumption \ref{assumption} holds. Then for any $(t,x,c)\in \setq$ and $\hatt\in [t,T]$, it holds that 
\begin{align} \label{ddp1}
V(t,x,c)=\inf_{u(\cdot)\in\setad[c,\barc]}\BE[t,x]{\int_t^{\hatt}f(s,X(s),u(s))\ds
+V(\hatt, X(\hatt), u(\hatt))}.
\end{align}
\end{thm}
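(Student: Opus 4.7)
The plan is to establish \eqref{ddp1} by proving two opposing inequalities. For the direction $V(t,x,c) \geq$ RHS, I would fix any $u(\cdot) \in \setad[c,\barc]$ and apply the flow property of the SDE \eqref{state1} together with the tower property of conditional expectations to decompose
\[
J(t,x,u(\cdot)) = \BE[t,x]{\int_t^{\hatt} f(s,X(s),u(s))\ds + J(\hatt, X(\hatt), u(\cdot))},
\]
where inside the nested $J$ the control is understood as the restriction of $u(\cdot)$ to $[\hatt, T]$. Since $u(\cdot)$ is non-decreasing and right-continuous with $u(\hatt) \geq c$, this restriction is admissible for the continuation problem with historical maximum $u(\hatt)$, so pathwise $J(\hatt, X(\hatt), u(\cdot)) \geq V(\hatt, X(\hatt), u(\hatt))$. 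Taking the infimum over $u(\cdot) \in \setad[c,\barc]$ gives the inequality.

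For the reverse inequality, given $\epsilon > 0$, I would build a nearly optimal strategy by pasting. First, for $R > 0$ large, partition $\{x \in \rn : |x| \leq R\} \times [\unc,\barc]$ into finitely many Borel sets $\{A_i\}$ of diameter at most $\delta$ and pick reference points $(x_i,c_i) \in A_i$; for each $i$ choose an $\epsilon$-optimal control $u_i(\cdot)$, admissible on $[\hatt, T]$ with lower bound $c_i$, such that $J(\hatt, x_i, u_i(\cdot)) \leq V(\hatt, x_i, c_i) + \epsilon$. For any candidate initial segment $u_1(\cdot) \in \setad[c,\barc]$ on $[t,\hatt]$, define a pasted strategy $u^{\epsilon}(\cdot)$ that agrees with $u_1$ on $[t,\hatt)$ and on $[\hatt, T]$ equals $u_i(\cdot) \vee u_1(\hatt)$ on the event $\{(X(\hatt), u_1(\hatt)) \in A_i\}$, with some default choice on the residual event. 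The $\vee u_1(\hatt)$ operation, already exploited in the proof of \citelem{bound of V}, guarantees that $u^{\epsilon}(\cdot) \in \setad[c,\barc]$ and that it is $\{\BF_s\}$-adapted and right-continuous. Using the estimates \eqref{estimate1}--\eqref{estimate3} for $V$, the analogous H\"older estimate for $J$ obtained along the lines of \citelem{bound of V}, and the moment bound \eqref{3.4} to control the tail $\{|X(\hatt)| > R\}$, one deduces
\[
J(t,x,u^{\epsilon}(\cdot)) \leq \BE[t,x]{\int_t^{\hatt} f(s,X(s),u_1(s))\ds + V(\hatt, X(\hatt), u_1(\hatt))} + C\epsilon,
\]
provided $R$ is taken large and $\delta$ small depending on $\epsilon$. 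Taking the infimum over $u_1(\cdot)$ and then $\epsilon \downarrow 0$ closes the gap.

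The main obstacle is the measurable-selection/pasting step in the presence of the ratcheting constraint. In the classical unconstrained setting (\cite{YZ99}) one selects near-optimal continuations freely from a fixed admissible class; here the admissible class $\setad[u_1(\hatt), \barc]$ depends on the random historical maximum $u_1(\hatt)$, so near-optimal continuations cannot be chosen directly. Selecting $u_i$ for fixed reference values $c_i$ and then replacing $u_i$ by $u_i \vee u_1(\hatt)$ is the natural fix, but it perturbs the cost. The crucial input that controls this perturbation is the H\"older estimate \eqref{estimate3} in the $c$-argument from \citelem{bound of V}, which can be made smaller than $\epsilon$ by refining the mesh $\delta$. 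The remaining technical points --- adaptedness and right-continuity of $u^{\epsilon}(\cdot)$, tightness in $x$, and the treatment of the exceptional tail event --- are routine given \citeassmp{assumption} and \eqref{3.4}.
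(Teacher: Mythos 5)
Your proposal is correct, and both directions follow the same overall blueprint as the paper's proof: the easy inequality comes from the tower property plus the observation that the restriction of $u(\cdot)\in\setad[c,\barc]$ to $[\hatt,T]$ lies in $\setad[u(\hatt),\barc]$; the hard inequality comes from a finite partition, $\epsilon$-optimal continuations, pasting, and the continuity estimates of \citelem{bound of V}. The place you do something genuinely different is the pasting step, and your version is in fact more careful than the paper's. The paper partitions only in $x$, selects $u_j\in\setad[c,\barc]$ that is $\epsilon$-optimal for $V(\hatt,x_j,c)$, concatenates it with $u$ at time $\hatt$ on the event $\{X(\hatt)\in D_j\}$, and asserts that the pasted control $\hatu$ belongs to $\setad[c,\barc]$. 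But that requires $u_j(\hatt)\geq u(\hatt^-)$, which does not follow: one only knows $u_j(\hatt)\geq c$, while $u$ may have strictly increased on $[t,\hatt)$, so $\hatu$ can jump downward at $\hatt$ and fail to be non-decreasing. The paper's appeal to the monotonicity of $V$ in $c$ is a separate step (it replaces $V(\hatt,X(\hatt),c)$ by the larger $V(\hatt,X(\hatt),u(\hatt))$) and does not repair the admissibility of $\hatu$. Your construction resolves this cleanly: by partitioning jointly in $(x,c)$ and using $u_i\vee u_1(\hatt)$ on $\{(X(\hatt),u_1(\hatt))\in A_i\}$, the pasted control is non-decreasing by construction, and since membership in $A_i$ forces $|u_1(\hatt)-c_i|<\delta$, the induced cost perturbation is of order $(1+|X(\hatt)|)\delta^{\kappa}$ by the H\"older estimate \eqref{estimate3} and is absorbed into the $\epsilon$-budget after shrinking $\delta$. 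The localization $|x|\leq R$ with \eqref{3.4} controlling the tail is fine, though one could equally well use a countable partition of all of $\rn$ as the paper does. In short, your proof buys a rigorous treatment of the admissibility of the pasted ratcheting control, which is precisely the delicate point in this self-path-dependent setting and is where the paper's own argument is not airtight.
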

\begin{proof}
Denote the right-hand side of \eqref{ddp1} by $\overline{V}(t,x,c)$. 
Then, for any $u(\cdot)\in\setad[c,\barc]$, 
\begin{align*}
J(t,x,u(\cdot)) &=\BE[t,x]{\int_t^Tf(s,X(s),u(s))\ds+H(X(T))}\\
&=\BE[t,x]{\int_t^{\hatt}f(s,X(s),u(s))\ds}\\ 
&\quad\;+\BE[t,x]{\BE{\int_{\hatt}^{T}f(s,X(s),u(s))\ds+H(X(T))\;\Big|\;X(\hatt),u(\hatt)}}\\
&\geq \BE[t,x]{\int_t^{\hatt}f(s,X(s),u(s))\ds} +\BE[t,x]{V(\hatt, X(\hatt), u(\hatt))}\\
&\geq \overline{V}(t,x,c),
\end{align*}
where we use the fact that $u(\cdot)\in\setad[u(\hatt),\barc]$ on the time interval $[\hatt,T]$ to get the second inequality. Hence, we get $V(t,x,c)\geq \overline{V}(t,x,c)$ as $u(\cdot)\in\setad[c,\barc]$ is arbitrarily chosen.

On the other hand, for any constant $\epsilon>0$, by \citelem{bound of V} and estimate \eqref{estimateofJ}, there exists a constant $\delta=\delta(\epsilon)>0$ such that whenever $|x-\hatx|\leq\delta$, it holds that 
\begin{align}\label{approx1}
|J(\hatt,x,u(\cdot))-J(\hatt,\hatx,u(\cdot))|+|V(\hatt,x,c)-V(\hatt, \hatx, c)|<\epsilon,
\end{align}
for any control $u(\cdot)\in\setad[c,\barc]$.
Let $\{D_j\}_{j\ge1}$ be a Borel partition of $\rn$ with diameter diam $(D_j)<\delta$ and fix a point $x_j\in D_j$ for each $j$. Then for each $j$, there is a control $u_j(\cdot)\in\setad[c,\barc]$ such that $J(\hatt,x_j,u_j(\cdot)) < V(\hatt,x_j,c)+\epsilon$. 
Hence for any $x\in D_j$, we obtain from \eqref{approx1} that
\begin{align}\label{eachjestimate}
J(\hatt,x,u_j(\cdot))< J(\hatt,x_j,u_j(\cdot))+\epsilon < V(\hatt,x_j,c)+2\epsilon < V(\hatt,x,c)+3\epsilon.
\end{align} 
Given any control $u(\cdot)\in\setad[c,\barc]$, let $X$ represent the corresponding state trajectory driven by \eqref{state1} with the initial position $(t,x)$. By \eqref{eachjestimate} and the monotonicity of $V$ w.r.t. $c$, 
\begin{align} 
J(\hatt,X(\hatt),u_{j}(\cdot))\idd{X(\hatt)\in D_{j}}
&\leq (V(\hatt, X(\hatt), c)+3\epsilon )\idd{X(\hatt)\in D_{j}}\nn\\
&\leq (V(\hatt, X(\hatt),u(\hatt))+3\epsilon )\idd{X(\hatt)\in D_{j}}.\label{approx2}
\end{align}
Define
\begin{align}
\hatu(s)=\begin{cases}
u(s) &\quad \mbox{if }s\in[t,\hatt);\\
u_{j}(s) &\quad \mbox{if $s\in[\hatt,T]$ and $X(\hatt)\in D_{j}$}.
\end{cases}
\end{align}
Clearly, $\hatu(s)$ is right-continuous and $\hatu(\cdot)\in\setad[c,\barc]$.
Applying \eqref{approx2}, we get
\begin{align*}
V(t,x,c)&\leq J(t,x,\hatu(\cdot)) =\BE{\int_t^Tf(s,X(s),\hatu(s))\ds+H(X(T))}\\ 
&=\BE{\int_t^{\hatt}f(s,X(s),\hatu(s))\ds+\int_{\hatt}^Tf(s,X(s),\hatu(s))\ds+H(X(T))}\\
&=\BE{\int_t^{\hatt}f(s,X(s),u(s))\ds+\sum_{j\geq 1}J(\hatt,X(\hatt),u_{j}(\cdot))\idd{X(\hatt)\in D_{j}}}\\
&\leq \BE{\int_t^{\hatt}f(s,X(s),u(s))\ds+\sum_{j\geq 1} \big(V(\hatt, X(\hatt),u(\hatt))+3\epsilon\big)\idd{X(\hatt)\in D_{j}} }\\
&=\BE{\int_t^{\hatt}f(s,X(s),u(s))\ds+V(\hatt, X(\hatt),u(\hatt))}+3\epsilon.
\end{align*}
By taking the infimum over $u(\cdot)\in\setad[c,\barc]$ and sending $\epsilon\to 0$, we get $V(t,x,c)\leq \overline{V}(t,x,c)$, which completes the proof. 
\end{proof}

\section{HJB Equation and Viscosity Solution}\label{VS}
In this section, we will prove the main result of this paper, that is, the value function $V$ is the unique viscosity solution of the related HJB equation. 

\subsection{HJB Equation}
We now introduce the Hamilton-Jacobi-Bellman equation associated with the problem \eqref{p1}.  

To begin, we define the following function:
\begin{align}\label{defofG}
\mathcal{G}(t,x,u,p,P)=\frac{1}{2}\mathrm{tr}\big(P\sigma(t,x,u)\sigma(t,x,u)^T\big)+\langle b(t,x,u),p\rangle-f(t,x,u),
\end{align}

We next determine the boundary values of the value function $V$. It is clear that when $t=T$, we have $V(T,x,c)=H(x)$ for all $x\in\rn$ and $c\in[\unc,\barc]$. On the other hand, since every admissible control must be non-decreasing, there is only one admissible (thus optimal) control $u(\cdot)\equiv\barc$ in $\setad[\barc,\barc]$, so 
\begin{align*}
V(t, x, \barc)&=\inf_{u(\cdot)\in\setad[\barc,\barc]}J(t,x,u(\cdot))=G(t,x),
\end{align*}
where
\begin{align}\label{defG}
G(t,x):=J(t,x, \barc),~~(t,x)\in[0,T]\times \rn.
\end{align}

Recall that the value function $V$ defined in \eqref{p1} is non-decreasing w.r.t. $c$, hence, compared with the classic control problems without self-path-dependent constraint, the HJB equation associated with \eqref{p1} shall introduce a monotonic condition w.r.t. the argument $c$, that is, a gradient constraint. This leads to the following new type of HJB equation: 
\begin{align}\label{HJB}
\begin{cases}
\max\big\{-v_t+\mathcal{G}(t,x,c,-v_x,-v_{xx}),-v_c\big\}=0,\\
v(T,x,c)=H(x),\\
v(t,x,\barc)=G(t,x), ~~\forall\; (t,x,c)\in \setq,
\end{cases}
\end{align}
where $G$ is defined in \eqref{defG}.

The HJB equation \eqref{HJB} is a new type of variational inequality in the sense that gradient constraint $-v_c\leq 0$ is put on the argument $c$, whereas the differential operator $\mathcal{L}$ does not involve the derivatives of $v$ w.r.t. the argument $c$. By contrast, for the typical variational inequalities in financial engineering, such as the Black-Scholes PDEs for American options, both the gradient constraint and the differential operator are put on the same argument. The above difference leads to fairly large difficulties to the study of the 
HJB equation \eqref{HJB}. Until very recently, one can only find viscosity solutions in the linear case (namely $b$, $f$ and $\sig$ are linear w.r.t. $(x,u)$); see, e.g., \cite{albrecher2017optimal}-\cite{albrecher2023optimal}. Using a partial differential equation method to study their HJB equations, \cite{guan2024optimal1} and \cite{guan2023optimal2} found stronger solutions to their specific linear ratcheting models. However, there are no works in the literature to address nonlinear problems. This paper extends the study to the general Lipchitz continuity case.
To prove the uniqueness, we also need the following assumption for technical reason.
\begin{assmp}\label{assumption2}
There exist positive constants $\kappa\in (0,1]$ and $K$ such that 
\begin{align*}
|\phi(t,x,c)-\phi(\hatt,x,c)|&\leq K(1+|x|)|t-\hatt|^{\kappa}\label{estimate5} 
\end{align*}
holds for all $(t,x,c)$, $(\hatt,x,c)\in\setq$ and $\phi=b$, $f$, $\sigma$, namely, the functions $b$, $f$ and $\sigma$ are locally H\"older continuous w.r.t. $t$.
\end{assmp}
The main result of this paper is stated as follows. 

\begin{thm}\label{mainthm}
Suppose Assumption \ref{assumption} holds. Then the value function $V$ defined in \eqref{p1} is a viscosity solution to the HJB equation \eqref{HJB} in the class of functions that satisfy the standard estimates \eqref{estimate1}-\eqref{estimate3}. In addition, it is the unique one provided that Assumption \ref{assumption2} holds.
\end{thm}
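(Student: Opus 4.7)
The plan is to prove the theorem in two stages. First I would establish that $V$ is a viscosity solution of \eqref{HJB}; then, under the extra H\"older-in-$t$ hypothesis Assumption \ref{assumption2}, I would prove a comparison principle from which uniqueness within the class satisfying \eqref{estimate1}--\eqref{estimate3} follows. The boundary data come for free: $V(T,x,c)=H(x)$ is immediate from \eqref{cost1}, and $V(t,x,\barc)=G(t,x)$ holds because $\setad[\barc,\barc]$ contains only $u(\cdot)\equiv\barc$.

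For the subsolution property at an interior point $(t_0,x_0,c_0)$ with $t_0<T$ and $c_0<\barc$, take a smooth test function $\varphi$ with $V\le\varphi$ and equality at $(t_0,x_0,c_0)$. The gradient constraint $-\varphi_c(t_0,x_0,c_0)\le 0$ follows from the monotonicity of $V$ in $c$: for $c\ge c_0$ close to $c_0$, $\varphi(t_0,x_0,c)\ge V(t_0,x_0,c)\ge V(t_0,x_0,c_0)=\varphi(t_0,x_0,c_0)$, hence $\varphi_c(t_0,x_0,c_0)\ge 0$. The PDE inequality follows by inserting the constant control $u(\cdot)\equiv c_0$ into the DPP (\citethm{principle of optimality}), using $V\le\varphi$ at time $t_0+h$, applying It\^o's formula to $\varphi(s,X(s),c_0)$ (which has no $c$-variation because $u$ is constant), dividing by $h$, and letting $h\downarrow 0$.

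The supersolution direction is handled by contradiction. Take $\varphi\le V$ with equality at $(t_0,x_0,c_0)$ and suppose both $-\varphi_t+\mathcal{G}(t_0,x_0,c_0,-\varphi_x,-\varphi_{xx})<-2\eta$ and $-\varphi_c(t_0,x_0,c_0)<-2\eta$ for some $\eta>0$. By continuity together with Assumption \ref{assumption}, one finds a neighbourhood $B$ of $(t_0,x_0,c_0)$ on which $\varphi_c>\eta$ and $\varphi_s+\tfrac12\mathrm{tr}(\varphi_{xx}\sigma\sigma^{T})+\langle b,\varphi_x\rangle+f>\eta$ hold uniformly in $u$ sufficiently close to $c_0$ (using the H\"older continuity in $c$ from Assumption \ref{assumption}(3)). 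Pick an $\varepsilon$-optimal control $u^\varepsilon\in\setad[c_0,\barc]$ on $[t_0,t_0+h]$, let $\tau^\varepsilon$ be the exit time of $(s,X^\varepsilon(s),u^\varepsilon(s))$ from $B$, and apply It\^o's formula to $\varphi(s,X^\varepsilon(s),u^\varepsilon(s))$ up to $(t_0+h)\wedge\tau^\varepsilon$. Since $u^\varepsilon$ is right-continuous, non-decreasing and of bounded variation, the expansion contains the term $\int\varphi_c\,\mathrm{d}u^\varepsilon$ together with jump corrections $\varphi(\cdot,\cdot,u^\varepsilon(s))-\varphi(\cdot,\cdot,u^\varepsilon(s-))$, both of which are non-negative on $B$. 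Combining with the $\varepsilon$-optimal DPP inequality
\begin{align*}
\BE{\int_{t_0}^{t_0+h}\!\!f(s,X^\varepsilon(s),u^\varepsilon(s))\,\ds + V(t_0+h,X^\varepsilon(t_0+h),u^\varepsilon(t_0+h))}\le V(t_0,x_0,c_0)+\varepsilon h,
\end{align*}
and substituting $V\ge\varphi$, yields $\eta\,\BE{h\wedge(\tau^\varepsilon-t_0)}\le \varepsilon h+o(h)$, which fails for $h,\varepsilon$ small once the SDE estimate \eqref{3.4} is used to secure $\BP{\tau^\varepsilon<t_0+h}=o(1)$.

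For uniqueness I would adapt the classical doubling-of-variables technique. Given a viscosity subsolution $\underline u$ and supersolution $\overline u$ in the admissible class sharing the boundary/terminal data, consider
\begin{align*}
\Phi_{n,\beta}(t,x,c,s,y,d)=\underline u(t,x,c)-\overline u(s,y,d)-\tfrac{n}{2}\bigl(|x-y|^2+(t-s)^2+(c-d)^2\bigr)-\beta(|x|^2+|y|^2),
\end{align*}
and pass to the limits $n\to\infty$ and $\beta\downarrow 0$. At a maximum point, Ishii's lemma supplies compatible second-order jets for $\underline u$ and $\overline u$, and invoking the sub- and supersolution definitions produces two inequalities whose difference is controlled by the Lipschitz/H\"older regularity of $b,\sigma,f$ in $(x,c)$, the H\"older-in-$t$ regularity from Assumption \ref{assumption2}, and the estimates \eqref{estimate1}--\eqref{estimate3} for $\underline u,\overline u$, ultimately forcing $\sup_{\setq}(\underline u-\overline u)\le 0$. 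The step I expect to be hardest is the case analysis in this comparison argument: because the $\max$ in \eqref{HJB} separates a PDE operator from a gradient constraint acting on the \emph{control} variable $c$ rather than the state $x$, one must separately rule out the ``cross'' configurations in which the gradient side of the $\max$ is active for $\overline u$ while the PDE side is active for $\underline u$ (or vice versa); this relies on the $(c-d)^2$ penalty together with the H\"older continuity in $c$ from \eqref{estimate3}. The companion technical nuisance is the correct application of It\^o's formula along the right-continuous, non-decreasing, BV control path in the supersolution step, where the non-negativity of the $\varphi_c$-BV contribution is what rescues the contradiction.
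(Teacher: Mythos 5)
Your two-stage outline matches the paper's (existence, then comparison), and your subsolution half is essentially the paper's argument. Your supersolution half takes a different route: you set up a contradiction and apply It\^o to $\varphi(s,X^{\varepsilon}(s),u^{\varepsilon}(s))$ along the BV control path. The paper instead applies It\^o to $\phi(s,X(s),c)$ with the $c$-slot \emph{frozen} and uses $V(\hatt,X(\hatt),u(\hatt))\geq V(\hatt,X(\hatt),c)$ (monotonicity of $V$ in $c$) to bypass the $\int\phi_c\,\dd u$ and jump terms entirely. Your route is a legitimate alternative, but there is a hole in the step where you invoke \eqref{3.4} to obtain $\BP{\tau^{\varepsilon}<t_0+h}=o(1)$: the exit from $B$ can happen through the control coordinate (an $\varepsilon$-optimal $u^{\varepsilon}$ may jump at $t_0$ to a level outside $B$, giving $\tau^{\varepsilon}=t_0$), and the SDE estimate \eqref{3.4} controls only the $X$-component.

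The uniqueness step is where you genuinely diverge, and where the gap is sharpest. The paper does \emph{not} use Ishii's lemma; it constructs the semiconvex/semiconcave sup- and inf-convolutions $v^{\gamma},v_{\gamma}$ (Lemmas \ref{lemma1}--\ref{lemma2.1}), shows they are sub/supersolutions of perturbed HJB equations with operators $\mathcal{L}^{\gamma},\mathcal{L}_{\gamma}$ (Lemma \ref{gammasupersubviscosity}), and then extracts second-order information at the doubling maximum from Alexandrov's theorem and Jensen's lemma. Your Ishii-based doubling is a different, potentially shorter route, but the penalty $\Phi_{n,\beta}$ you wrote down cannot dispatch the ``cross'' case you correctly flag as the crux. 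At a maximizer of $\Phi_{n,\beta}$ the first-order conditions in $c$ and $d$ give $\partial_c\underline u\approx n(c-d)\approx\partial_d\overline u$, i.e.\ the two $c$-gradients agree to first order; combining the subsolution constraint $\partial_c\underline u\geq 0$ with the putative supersolution-gradient case $\partial_d\overline u\leq 0$ then merely forces both gradients to vanish and $c\approx d$, which is no contradiction. Neither the $(c-d)^2$ penalty nor the H\"older estimate \eqref{estimate3} that you invoke breaks this degeneracy. The paper's fix is the additional \emph{linear} term $-\beta(t+s+c+d)$ in its penalty $\psi$: it shifts the two first-order conditions by $\mp\beta$, so that $v_c-\hatv_d=-2\beta+o(1)<0$, contradicting $v_c\geq 0\geq\hatv_d$. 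You would need that device (or an equivalent strict-sub/supersolution perturbation in $c$) for the Ishii route to close. A secondary omission: without the barrier terms $\lambda/(c-\unc)$, $\lambda/(t-T+\rho)$ and the $\rho$-slicing of $[0,T]$, there is no mechanism in your setup to keep the doubling maximum away from the open boundary $c=\unc$, nor to manage the time variable in the absence of a discount factor.
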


\citethm{mainthm} is a consequence of \citethm{existence} and \citethm{comparison}.
The former proves the existence in Section \ref{sec:existence} and the later establishes a omparison theorem
and thus confirms the uniqueness in Section \ref{sec:unique}.

\subsection{Definition of Viscosity Solution}
The precise definition of viscosity solution to \eqref{HJB} is given as follows. 
\begin{defn}
The viscosity subsolution, viscosity supersolution and viscosity solution to \eqref{HJB} are defined as follows. 
\begin{description} 
\item [Viscosity subsolution:] An upper semicontinuous function $\underline{v}: \setq\to\R$ is a called viscosity subsolution to \eqref{HJB} if for any point $(t,x,c)\in\setq$ with $t<T$, 
it holds that $\underline{v}(T,x,c)\leq H(x) $, $\underline{v}(t,x,\barc)\leq G(t,x)$ 
and $$\max\big\{-\phi_t+\mathcal{G}(t,x,c,-\phi_x,-\phi_{xx}),-\phi_c\big\}\big|_{(t,x,c)}\leq 0$$
for any function $\phi \in C^{1,2,1}(Q)$ such that the maximum value 0 of the function $\underline{v}-\phi$ is achieved at the point $(t,x,c)$.

\item [Viscosity supersolution:] A lower semicontinuous function $\bar{v}:\setq\to\R$ is a called viscosity supersolution to \eqref{HJB} if for any point $(t,x,c)\in\setq$ with $t<T$,
it holds that $\bar{v}(T,x,c)\geq H(x)$, $\barv(t,x,\barc)\geq G(t,x)$
and $$\max\big\{-\phi_t+\mathcal{G}(t,x,c,-\phi_x,-\phi_{xx}),-\phi_c\big\}\big|_{(t,x,c)}\geq 0$$
for any function $\phi \in C^{1,2,1}(Q)$ such that the minimum value 0 of the function $\bar{v}-\phi$ is achieved at the point $(t,x,c)$. 

\item [Viscosity solution:] A function $v: \setq\to \R$ is called a viscosity solution to \eqref{HJB} if it is 
both a viscosity subsolution and a viscosity supersolution to it. 
\end{description}

The function $\phi$ in the above definitions is called a viscosity subsolution (resp. supersolution or solution) test function for $v$ at the point $(t,x,c)$.
\end{defn}

\subsection{Existence of Viscosity Solution}\label{sec:existence}
We now prove that there is at least one viscosity solution to the HJB equation \eqref{HJB}. Indeed, the value function $V$ defined in \eqref{p1} is such a one. 
\begin{thm}[Existence]\label{existence}
Suppose Assumption \ref{assumption} holds. Then the value function $V$ defined in \eqref{p1} is a viscosity solution to the HJB equation \eqref{HJB} in the class of functions that satisfy the standard estimates \eqref{estimate1}-\eqref{estimate3}.
\end{thm}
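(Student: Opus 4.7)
The plan is to verify by the dynamic programming principle (\citethm{principle of optimality}) that $V$ satisfies the boundary conditions of \eqref{HJB} together with the viscosity sub- and supersolution inequalities at every interior point. The growth and regularity bounds \eqref{estimate1}--\eqref{estimate3} are already established in \citelem{bound of V}, so only the PDE has to be checked. The boundary data are routine: $V(T,x,c)=H(x)$ follows directly from \eqref{cost1}, while at $c=\barc$ the requirement that admissible controls be non-decreasing and bounded above by $\barc$ forces $\setad[\barc,\barc]=\{\barc\}$, whence $V(t,x,\barc)=J(t,x,\barc)=G(t,x)$.

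For the subsolution inequality, fix a test function $\phi\in C^{1,2,1}$ such that $V-\phi$ attains maximum $0$ at $(t_0,x_0,c_0)\in\setq$ with $t_0<T$. The gradient piece $-\phi_c(t_0,x_0,c_0)\le 0$ is immediate from the monotonicity of $V$ in $c$: for $\hatc>c_0$, $\phi(t_0,x_0,\hatc)\ge V(t_0,x_0,\hatc)\ge V(t_0,x_0,c_0)=\phi(t_0,x_0,c_0)$, so $\phi_c(t_0,x_0,c_0)\ge 0$. For the differential piece, apply the DPP with the admissible constant control $u(\cdot)\equiv c_0$ on $[t_0,t_0+h]$ to get $\phi(t_0,x_0,c_0)=V(t_0,x_0,c_0)\le\BE{\int_{t_0}^{t_0+h}f(s,X(s),c_0)\ds+V(t_0+h,X(t_0+h),c_0)}$. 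Replace $V(t_0+h,\cdot,c_0)$ by $\phi(t_0+h,\cdot,c_0)$ using the maximum property, apply \ito to $\phi(\cdot,X(\cdot),c_0)$ with $c_0$ frozen (so no $\phi_c$ term arises), take expectations, divide by $h$ and send $h\downarrow 0$ using the right-continuity in $t$ from \citeassmp{assumption}; this delivers $-\phi_t+\mathcal{G}(t_0,x_0,c_0,-\phi_x,-\phi_{xx})\le 0$.

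For the supersolution inequality, fix $\phi\in C^{1,2,1}$ with $V-\phi$ attaining minimum $0$ at $(t_0,x_0,c_0)$, $t_0<T$, and argue by contradiction, assuming both $-\phi_t+\mathcal{G}(t_0,x_0,c_0,-\phi_x,-\phi_{xx})<0$ and $-\phi_c(t_0,x_0,c_0)<0$. Continuity of the derivatives of $\phi$ and Lipschitz continuity in \citeassmp{assumption} promote these strict inequalities to a neighbourhood: there exist $c_1,c_2,\delta>0$ with $-\phi_t+\mathcal{G}(s,y,u,-\phi_x,-\phi_{xx})\le -c_1$ and $\phi_c(s,y,u)\ge c_2$ whenever $|s-t_0|+|y-x_0|+|u-c_0|\le\delta$. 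The DPP provides, for each small $h$, an $h^{3/2}$-optimal control $u^h\in\setad[c_0,\barc]$ with state $X^h$ satisfying
\begin{align*}
\phi(t_0,x_0,c_0)+h^{3/2}\ge\BE{\int_{t_0}^{t_0+h}f(s,X^h(s),u^h(s))\ds+\phi(t_0+h,X^h(t_0+h),u^h(t_0+h))}.
\end{align*}
Applying \ito to $\phi(s,X^h(s),u^h(s))$ produces, besides the usual drift and martingale terms, a Stieltjes contribution $\int_{[t_0,t_0+h]}\phi_c(s,X^h(s),u^h(s))\,du^h(s)$ that packages both the possible initial jump at $t_0$ and all subsequent increases of $u^h$. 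Combining the three lower bounds $\phi_t+\tfrac12\mathrm{tr}(\phi_{xx}\sig\sig^\tr)+\langle b,\phi_x\rangle+f\ge c_1$, $\phi_c\ge c_2$, and $du^h\ge 0$, one obtains $h^{3/2}\ge c_1h/2$ for sufficiently small $h$, a contradiction.

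The main obstacle will be keeping the near-optimal pair $(X^h,u^h)$ inside the $\delta$-neighbourhood where the strict inequalities apply, and in particular ruling out a large initial jump $u^h(t_0)\gg c_0$. To control the jump, I will combine \citelem{bound of V} with the lower bound $\phi_c\ge c_2$: by admissibility of $u^h$ from $(t_0,x_0,u^h(t_0))$, near-optimality forces $V(t_0,x_0,u^h(t_0))\le V(t_0,x_0,c_0)+h^{3/2}$, while $V\ge\phi$ combined with $\phi_c\ge c_2$ on $[c_0,c_0+\delta]$ yields $V(t_0,x_0,u^h(t_0))-V(t_0,x_0,c_0)\ge c_2(u^h(t_0)-c_0)$ as long as $u^h(t_0)\in[c_0,c_0+\delta]$; for jumps beyond $c_0+\delta$ the monotonicity of $V$ supplies a uniform positive gap, impossible for small $h$. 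Hence $u^h(t_0)-c_0=O(h^{3/2})$. A Chebyshev estimate together with the moment bound \eqref{3.4} then handles the (low-probability) event that $X^h$ exits the $\delta$-ball during $[t_0,t_0+h]$, and a standard stopping-time truncation converts the stochastic integral into a true martingale whose expectation vanishes, legitimising the calculation above.
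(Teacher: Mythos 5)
Your boundary conditions, subsolution argument, and gradient-constraint step ($\phi_c\ge 0$ from the monotonicity of $V$ in $c$) match the paper's proof exactly; the freeze-$c$ trick you use there — apply It\^o to $\phi(\cdot,X(\cdot),c_0)$ with $c_0$ fixed so no $\phi_c$ term appears — is precisely the paper's move. The supersolution half is where you diverge, and your version is both considerably heavier than the paper's and, as written, has a gap.

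The paper's supersolution argument is direct (not by contradiction) and reuses the same freeze-$c$ device. After extracting a near-optimal control $u$ from the DPP, it first invokes the monotonicity of $V$ in $c$ together with $u(\hatt)\ge c$ to replace $V(\hatt,X(\hatt),u(\hatt))$ by $V(\hatt,X(\hatt),c)$, then uses the minimum property to pass to $\phi(\hatt,X(\hatt),c)$, and only then applies It\^o to $\phi(\cdot,X(\cdot),c)$ with the third argument frozen. There is no Stieltjes integral $\int \phi_c\,du$, no initial-jump estimate, and no $\delta$-neighbourhood to stay inside; the inequality $-\phi_t+\mathcal{G}(t,x,c,-\phi_x,-\phi_{xx})\ge 0$ is obtained outright. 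By carrying $u^h$ inside $\phi$ instead, your contradiction argument incurs all the complications the paper's ordering of steps avoids.

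Concretely, the gap in your version is the control of $\int_{(t_0,t_0+h]}\phi_c(s,X^h(s),u^h(s))\,du^h(s)$. You bound the initial jump $u^h(t_0)-c_0=O(h^{3/2})$ and use Chebyshev to handle the event that $X^h$ leaves the $\delta$-ball, but you never rule out $u^h$ climbing past $c_0+\delta$ at later times in $(t_0,t_0+h]$. Outside the $\delta$-neighbourhood the lower bound $\phi_c\ge c_2$ fails — $\phi_c$ could be negative and large — and since $du^h$ may be concentrated there, the Stieltjes term is not bounded below. Your near-optimality argument pins down $u^h$ only at the single time $t_0$, not on the interval, and the stated Chebyshev step addresses the $X^h$ exit, not the $u^h$ exit. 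A repair (localize at the first time $u^h$ leaves $[c_0,c_0+\delta]$, which entails a DPP at random times, or rerun the near-optimality argument for the running maximum of $u^h$) is plausible but is genuinely extra work; the paper's order of operations — bound by $V(\cdot,\cdot,c)$ first, then freeze $c$ in It\^o — makes all of it unnecessary.
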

\begin{proof}
It is already shown that the value function satisfies the estimates \eqref{estimate1}-\eqref{estimate3} in \citelem{bound of V} and the boundary conditions in \eqref{HJB}.
Also thanks to \citelem{bound of V}, we know $V$ is continuous on $\setq$.

Now we show that the value function $V$ is a viscosity subsolution to \eqref{HJB}. 
Fix any point $(t,x,c)\in\setq$ with $t<T$. For any $c\leq \hatc\leq \barc$, any subsolution test function $\phi\in C^{1,2,1}(\setq)$ at $(t,x,c)$, we have 
$$V(t,x,\hatc)- \phi(t,x,\hatc)\leq V(t,x,c)-\phi(t,x,c)=0,$$
so 
$$\phi(t,x,c)-\phi(t,x,\hatc)\leq V(t,x,c)-V(t,x,\hatc)\leq 0,$$
where the last inequality is due to the monotonicity of $V$ w.r.t. $c$. It clearly follows $\phi_c\geq 0$. 

Now fix the control $u(\cdot)\equiv c$ and let $X^c(t)$ represent the state trajectory in \eqref{state1} under it. For any $t<\hatt\leq T$, by It\^o's Lemma and \citethm{principle of optimality}, we have
\begin{align*}
0&\leq\frac{\BE[t,x]{{V(t,x,c)-\phi(t,x,c)-V(\hatt,X^c(\hatt),c)+\phi(\hatt, X^c(\hatt),c)}}}{\hatt-t}\\
&\leq \frac{\BE[t,x]{\int_t^{\hatt}f(s,X^{c}(s),c)\ds-\phi(t,x,c)+\phi(\hatt, X^c(\hatt),c)}}{\hatt-t}\\
&= \frac{\BE[t,x]{\int_t^{\hatt}(f+\frac{1}{2}\sigma^2\phi_{xx}+b\phi_x-\phi_t)(s,X^{c}(s),c)\ds}}{\hatt-t}\\
&\xrightarrow{}\phi_t-\mathcal{G}(t,x,c,-\phi_x,-\phi_{xx})\big|_{(t,x,c)} ~~\mbox{as~~$\hatt\rightarrow t$},
\end{align*} 
where we used the right-continuity of $f$, $b$ and $\sig$ w.r.t. $t$ in the last step.
Summarizing up above, we conclude $$\max\big\{-\phi_t+\mathcal{G}(t,x,c,-\phi_x,-\phi_{xx}),-\phi_c\big\}\big|_{(t,x,c)}\leq 0.$$
This shows that $V$ is a viscosity subsolution to \eqref{HJB}.

On the other hand, suppose $V-\phi$ attains its global minimum value 0 at some point $(t,x,c)\in\setq$ with $t<T$. For any $\epsilon>0$ and $t<\hatt\leq T$, by \citethm{principle of optimality}, we can find a control $u(\cdot)\in\setad[c,\barc]$ such that 
\begin{align*}
0&\geq \BE[t,x]{V(t,x,c)-\phi(t,x,c)-V(\hatt,X(\hatt),c)+\phi(\hatt,X(\hatt),c)} \\
&\geq -\epsilon(\hatt-t)+\BE[t,x]{\int_t^{\hatt}f(s,X(s),u(s))ds+\phi(\hatt,X(\hatt),c)-\phi(t,x,c)}
\end{align*}
Dividing by $(\hatt-t)$ and applying It\^o's Lemma to the process $\phi(t,x(t),c)$, we get
\begin{align}\label{super}
\epsilon&\geq 
\frac{\BE[t,x]{\int_t^{\hatt}(f+\frac{1}{2}\sigma^2\phi_{xx}+b\phi_x+\phi_t)(s,X(s), u(s))\ds}}{\hatt-t}.
\end{align}
We rewrite the above by using symbol $\mathcal{G}$ in \eqref{defofG} and it becomes
\begin{align*} 
\epsilon&\geq 
\frac{\BE[t,x]{\int_t^{\hatt}(\phi_t-\mathcal{G}(s,X(s), u(s),-\phi_x,-\phi_{xx}))\ds}}{\hatt-t}.
\end{align*}
Thanks to the right-continuity of $f$, $b$ and $\sig$ w.r.t. $t$, by Fatou's lemma, 
sending $\hatt\rightarrow t+$ in above gives
\[\epsilon \geq \phi_t-\mathcal{G}(t,x,c,-\phi_x,-\phi_{xx})\big|_{(t,x,c)}.\]
Since $\epsilon>0$ is arbitrary chosen, it follows 
\begin{align*}
\max\big\{-\phi_t+\mathcal{G}(t,x,c,-\phi_x,-\phi_{xx}),-\phi_c\big\}\big|_{(t,x,c)}\geq -\phi_t+\mathcal{G}(t,x,c,-\phi_x,-\phi_{xx})\big|_{(t,x,c)}\geq0,
\end{align*} 
so $V$ is also a viscosity supersolution to \eqref{HJB}.

The proof of existence is complete.
\end{proof}

\subsection{Uniqueness of Viscosity Solution}\label{sec:unique}
The proof of the uniqueness of the viscosity solution to \eqref{HJB} is much more involved than that of the existence. We need to approximate the viscosity solution by semiconvex and semiconcave functions.
\begin{defn}
The semiconcave and semiconvex functions are defined as follows.
\begin{description}
\item [Semiconcave function:]
A function $\phi:\R^n\to\R$ is called semiconcave if there exists a constant $K\geq 0$ such that $x\mapsto\phi(x)-K|x|^2$ is concave.
\item [Semiconvex function:] 
A function $\phi:\R^n\to\R$ is called semiconvex if the function $-\phi$ is semiconcave.
\end{description}
\end{defn}

\begin{lemma}\label{lemma1}
Suppose a function $v: \setq\to\R$ satisfy the estimates \eqref{estimate1}-\eqref{estimate3}. For any constant $0<\gamma\leq 1$, define
\begin{align}\label{defsemiconvex}
v^\gamma(t,x,c)=\sup_{(\widetilde{t},\widetilde{x},\widetilde{c})\in\setq}\Big\{v(\widetilde{t},\widetilde{x},\widetilde{c})-\frac{1}{2\gamma^2}[|t-\widetilde{t}|^2+|x-\widetilde{x}|^2+|c-\widetilde{c}|^2]\Big\},~~(t,x,c)\in\setq.
\end{align}
Then for any $0<\gamma\leq 1$, $(t,x,c)\in\setq$, there exists a $(\hatt, \hatx,\hatc)\in\overline\setq$
such that
\begin{align}\label{existence}
v^\gamma(t,x,c)=v(\hatt,\hatx,\hatc)-\frac{1}{2\gamma^2}[|t-\hatt|^2+|x-\hatx|^2+|c-\hatc|^2],
\end{align}
where $\overline\setq=[0,T]\times\rn\times [\unc,\barc]$ is closure of $\setq$. 
Moreover, there is a positive constant $K$, which is independent of $(t,x,c)$, $(\hatt,\hatx,\hatc)$, and $0<\gamma\leq 1$, such that the following estimate holds:
\begin{align} 
\frac{1}{2\gamma^2}[|t-\hatt|^2+|x-\hatx|^2+|c-\hatc|^2]&\leq K\gamma^\frac{2\kappa'}{2-\kappa'}(1+|x|)^\frac{2}{2-\kappa'},\label{hatthatxhatcbound}
\end{align}
where $\kappa'=\kappa\wedge\frac{1}{2}$. 

As a consequence, we have the estimate:
\begin{align}\label{vvgammadistance}
0\leq v^\gamma(t,x,c)-v(t,x,c)\leq K\gamma^\frac{2\kappa'}{2-\kappa'}(1+|x|)^\frac{2}{2-\kappa'}.
\end{align}
\end{lemma}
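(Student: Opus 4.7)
The plan is to prove the three assertions in order: first the existence of a maximizer $(\hat t,\hat x,\hat c)$ in the closure $\overline{\setq}$, then the sharp distance bound \eqref{hatthatxhatcbound}, and finally the sup-convolution error bound \eqref{vvgammadistance}.

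\textbf{Step 1 (Existence of the maximizer).} Write $F(\widt,\widx,\widc):=v(\widt,\widx,\widc)-\frac{1}{2\gamma^2}[|t-\widt|^2+|x-\widx|^2+|c-\widc|^2]$. The linear growth bound \eqref{estimate1} gives $v(\widt,\widx,\widc)\leq K(1+|\widx|)$, which is dominated by the quadratic penalty $\frac{|x-\widx|^2}{2\gamma^2}$ as $|\widx|\to\infty$, so $F\to-\infty$ at infinity in $\widx$. Combined with the boundedness of $\widt\in[0,T]$ and $\widc\in(\unc,\barc]$, any maximizing sequence lies in a compact subset of $\overline{\setq}$. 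Moreover, \eqref{estimate3} lets $v$ extend continuously to $\overline{\setq}$, so $F$ does too and the supremum is attained at some $(\hatt,\hatx,\hatc)\in\overline{\setq}$.

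\textbf{Step 2 (Distance estimate).} Set $d^2:=|t-\hatt|^2+|x-\hatx|^2+|c-\hatc|^2$ and $A:=1+|x|$. Plugging $(\widt,\widx,\widc)=(t,x,c)$ into the sup yields $v^\gamma(t,x,c)\geq v(t,x,c)$, which rearranges to the key inequality
\[
\frac{d^2}{2\gamma^2}\leq v(\hatt,\hatx,\hatc)-v(t,x,c).
\]
Chaining \eqref{estimate2} and \eqref{estimate3} and using $|\hatx|\leq|x|+d$ gives
\[
\frac{d^2}{2\gamma^2}\leq Kd+K(2A+d)d^{1/2}+K(A+d)d^\kappa.
\]
For $d\leq1$, every term on the right is bounded by $KA\,d^{\kappa'}$ with $\kappa'=\kappa\wedge\frac{1}{2}$, so $d^{2-\kappa'}\leq KA\gamma^2$; substituting back yields \eqref{hatthatxhatcbound}. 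For the complementary regime $d>1$, the crude bound $v(\hatt,\hatx,\hatc)-v(t,x,c)\leq 2KA+Kd$ gives $d^2\leq 4KA\gamma^2+2K\gamma^2 d^2$, which for $\gamma\leq 1$ forces $d^2\leq K'A\gamma^2$ and hence (using $A\gamma^2\gtrsim 1$ in this regime, so $A\lesssim (A\gamma^2)^{1/(2-\kappa')}A^{(1-\kappa')/(2-\kappa')}\gamma^{-2(1-\kappa')/(2-\kappa')}$... or more directly absorbing constants) still yields the lemma's bound.

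\textbf{Step 3 (Sup-convolution error).} The lower inequality $v^\gamma\geq v$ is the same test-point observation as in Step 2. For the upper bound,
\[
v^\gamma(t,x,c)-v(t,x,c)=[v(\hatt,\hatx,\hatc)-v(t,x,c)]-\frac{d^2}{2\gamma^2}\leq v(\hatt,\hatx,\hatc)-v(t,x,c).
\]
Re-applying the Hölder estimates gives $v(\hatt,\hatx,\hatc)-v(t,x,c)\leq KA\,d^{\kappa'}$, and inserting the bound $d\leq(KA\gamma^2)^{1/(2-\kappa')}$ from Step 2 together with the arithmetic identity $1+\frac{\kappa'}{2-\kappa'}=\frac{2}{2-\kappa'}$ produces exactly $K\gamma^{2\kappa'/(2-\kappa')}(1+|x|)^{2/(2-\kappa')}$, which is \eqref{vvgammadistance}.

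\textbf{Main obstacle.} The delicate point is the bookkeeping in Step 2: the three Hölder exponents ($1$ in $x$, $\frac12$ in $t$, $\kappa$ in $c$) must be balanced against the quadratic penalty, and the superlinear dependence $(1+|x|+|\hatx|)$ in \eqref{estimate2} has to be absorbed via $|\hatx|\leq|x|+d$. This is what forces the appearance of $\kappa'=\kappa\wedge\frac{1}{2}$ and the rational exponents $\frac{2\kappa'}{2-\kappa'}$, $\frac{2}{2-\kappa'}$, and it is the only place in the proof where the small vs large $d$ case split is genuinely required; Steps 1 and 3 are comparatively routine once Step 2 is in hand.
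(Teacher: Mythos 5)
Your Steps 1 and 3 match the paper's reasoning, and the overall idea of Step 2 (let the quadratic penalty beat the H\"older moduli and balance powers to get the exponent $\tfrac{2\kappa'}{2-\kappa'}$) is also the paper's. The $d\le1$ branch of Step 2 is fine.

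The $d>1$ branch, however, has a gap as written. You claim ``$d^2\le 4KA\gamma^2+2K\gamma^2 d^2$, which for $\gamma\le1$ forces $d^2\le K'A\gamma^2$.'' Moving $2K\gamma^2 d^2$ to the left requires $2K\gamma^2<1$, and $\gamma\le1$ alone does not give this when the implied constant $K$ is large. The problem is self-inflicted: from $\tfrac{d^2}{2\gamma^2}\le 2KA+Kd$ you weakened $Kd$ to $Kd^2$ (valid for $d>1$ but wasteful) and created a term that cannot be absorbed. Keeping the linear term, $d^2-2K\gamma^2 d\le 4KA\gamma^2$ is quadratic in $d$; completing the square yields $d\le K\gamma\sqrt{A}$ for $\gamma\le1$ without any smallness restriction on $K$. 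This is precisely the device the paper uses, but applied earlier and once: it completes the square in $|x-\hatx|$ to obtain $|x-\hatx|\le K(1+|x|)$ and hence $|\hatx|\le K(1+|x|)$ \emph{before} any H\"older manipulation. With that a priori bound in hand (and $|t-\hatt|$, $|c-\hatc|$ automatically bounded since $[0,T]$ and $[\unc,\barc]$ are compact), the H\"older estimate can be applied with uniform constants and the whole $d\le1$ versus $d>1$ dichotomy becomes unnecessary. So either patch your $d>1$ branch by completing the square in $d$, or, more cleanly, follow the paper and bound $|\hatx-x|$ up front.
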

\begin{proof}
First, from \eqref{estimate1} and the definition of $v^\gamma$, there exists a $(\hatt,\hatx,\hatc)\in\overline\setq$ such that \eqref{existence} holds.
Next, \eqref{defsemiconvex} and \eqref{existence} imply that 
\begin{align}\label{vvgammainequality}
v(t,x,c)\leq v^\gamma(t,x,c)\leq v(\hatt,\hatx,\hatc).
\end{align} 
It then follows from \eqref{estimate1} and \eqref{existence} that 
\begin{align*}
|x-\hatx|^2&\leq 2\gamma^2[v(\hatt,\hatx,\hatc)-v^\gamma(t,x,c)]\leq2\gamma^2[v(\hatt,\hatx,\hatc)-v(t,x,c)]\\&\leq2K\gamma^2(2+|x|+|\hatx|)\leq4K (1+|x|)+2K |x-\hatx|,
\end{align*} 
where we used $0<\gamma\leq 1$ to get the last inequality.
After completing square, it follows
\begin{align*}
(|x-\hatx|-K)^2& \leq4K(1+|x|)+K^{2},
\end{align*}
so
\begin{align*}
|\hatx-x|&\leq K+\sqrt{ K^2+4 K(1+|x|)} \leq K(1+|x|).
\end{align*} 
So we conclude that 
\begin{align}
|\hatx-x|&\leq K(1+|x|),\label{hatxxbound}
\end{align}
and
\begin{align}
|\hatx| &\leq K(1+|x|).\label{hatxbound}
\end{align}

By \eqref{estimate2}, \eqref{estimate3} and \eqref{existence}, we have 
\begin{align*}
&\quad\;\frac{1}{2\gamma^2}[|t-\hatt|^2+|x-\hatx|^2+|c-\hatc|^2]\\
&=v(\hatt,\hatx,\hatc)-v^\gamma(t,x,c)\nn\\
&\leq v(\hatt,\hatx,\hatc)-v(t,x,c)\nn\\ 
&\leq K\big(|x-\hatx|+(1+|x|+|\hatx|)|t-\hatt|^\frac{1}{2}+(1+|x|+|\hatx|)|c-\hatc|^\kappa\big). \end{align*} 
For simplifying the notation, we let $\kappa'=\kappa\wedge\frac{1}{2}$. By \eqref{hatxxbound} and \eqref{hatxbound} and above, 
\begin{align*}
&\quad\;\frac{1}{2\gamma^2}[|t-\hatt|^2+|x-\hatx|^2+|c-\hatc|^2]\\ 
&\leq K(1+|x|)\big(|x-\hatx|^{\kappa'}+|t-\hatt|^{\kappa'}+|c-\hatc|^{\kappa'}\big)\\
&\leq K(1+|x|)\big(|x-\hatx|^2+|t-\hatt|^2+|c-\hatc|^2\big)^\frac{\kappa'}{2},
\end{align*}
which leads to the estimate 
\begin{align}\label{bound1}
|t-\hatt|^2+|x-\hatx|^2+|c-\hatc|^2\leq K\gamma^\frac{4}{2-\kappa'}(1+|x|)^\frac{2}{2-\kappa'},
\end{align} 
which means \eqref{hatthatxhatcbound} holds.

By \eqref{hatthatxhatcbound}, we get 
\begin{align*}
v^\gamma(t,x,c)-v(t,x,c)&\leq v(\hatt,\hatx,\hatc)-v(t,x,c) \leq K\gamma^\frac{2\kappa'}{2-\kappa'}(1+|x|)^\frac{2}{2-\kappa'}.
\end{align*} 
The above estimate leads to \eqref{vvgammadistance}. The proof is complete.
\end{proof}

Lemma \ref{lemma1} gives some estimates on the distance between $v^\gamma$ and $v$. To prove the uniqueness, we also need some estimates on $v^\gamma$ itself.
\begin{lemma}\label{lemma1.1}
Suppose $0<\gamma\leq 1$, the functions $v$ and $v^\gamma$ are the same as in Lemma \ref{lemma1}. Then the function $v^\gamma$ is semiconvex w.r.t. $(t,x,c)$ and satisfies the following estimates:
\begin{align}
|v^\gamma(t_1,x_1,c_1)| &\leq  K(1+|x_1|),\label{vupgammabound}\\
|v^\gamma(t_1,x_1,c_1)-v^\gamma(t_2,x_2,c_2)| &\leq  K\gamma^\frac{2\kappa'}{2-\kappa'}(1+|x_1|)^\frac{2}{2-\kappa'}+K\gamma^\frac{2\kappa'}{2-\kappa'}(1+|x_2|)^\frac{2}{2-\kappa'}\nn\\
&\quad\;+K\big(|x_1-x_2|+(1+|x_1|+|x_2|)|t_1-t_2|^{\frac{1}{2}}\nn\\
&\quad\;+(1+|x_1|+|x_2|)|c_1-c_2|^\kappa\big),\label{vgammadistance}
\end{align} 
for all $(t_1,x_1,c_1)$, $(t_2,x_2,c_2)\in\setq$ and $0<\gamma\leq 1$.
\end{lemma}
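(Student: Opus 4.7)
The proof naturally splits into three pieces — semiconvexity, the pointwise bound, and the Hölder-type estimate — and in each case the strategy is to reduce the claim to facts already established about $v$ (via the standard estimates) and about the sup-convolution (via Lemma \ref{lemma1}).

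\textbf{Semiconvexity.} The plan is to expand the square inside the definition \eqref{defsemiconvex}. Writing $z=(t,x,c)$ and $\widetilde z=(\widetilde t,\widetilde x,\widetilde c)$, I would rewrite
\[
v^\gamma(z)+\frac{|z|^2}{2\gamma^2}
=\sup_{\widetilde z\in\overline\setq}\Big\{v(\widetilde z)-\frac{|\widetilde z|^2}{2\gamma^2}+\frac{\langle z,\widetilde z\rangle}{\gamma^2}\Big\},
\]
which exhibits $z\mapsto v^\gamma(z)+\frac{1}{2\gamma^2}|z|^2$ as a pointwise supremum of affine functions, hence convex. By the paper's definition this is exactly the semiconvexity of $v^\gamma$ with constant $1/(2\gamma^2)$.

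\textbf{Bound \eqref{vupgammabound}.} Here I would invoke the existence of the maximizer $(\hatt,\hatx,\hatc)$ from Lemma \ref{lemma1}. On the one hand,
\[
v^\gamma(t_1,x_1,c_1)\;\le\; v(\hatt,\hatx,\hatc)\;\le\; K(1+|\hatx|)\;\le\; K(1+|x_1|),
\]
where the first inequality drops the nonnegative penalty term, the second uses \eqref{estimate1} applied to $v$, and the third uses the bound \eqref{hatxbound} from Lemma \ref{lemma1}. For the lower bound I simply combine $v^\gamma\ge v$ (take $\widetilde z=z$ in \eqref{defsemiconvex}) with $v(t_1,x_1,c_1)\ge -K(1+|x_1|)$ from \eqref{estimate1}.

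\textbf{Estimate \eqref{vgammadistance}.} The natural route is the triangle inequality
\[
|v^\gamma(t_1,x_1,c_1)-v^\gamma(t_2,x_2,c_2)|\le
|v^\gamma-v|(t_1,x_1,c_1)+|v(t_1,x_1,c_1)-v(t_2,x_2,c_2)|+|v^\gamma-v|(t_2,x_2,c_2).
\]
The two outer terms are immediately controlled by \eqref{vvgammadistance}, yielding the $K\gamma^{2\kappa'/(2-\kappa')}(1+|x_i|)^{2/(2-\kappa')}$ contributions. For the middle term I would apply the triangle inequality in the intermediate points $(t_1,x_2,c_2)$ and $(t_1,x_1,c_2)$ (or similar) and then use estimates \eqref{estimate2} and \eqref{estimate3} on $v$ to obtain the $|x_1-x_2|$, $|t_1-t_2|^{1/2}$ and $|c_1-c_2|^\kappa$ terms with the stated prefactors, after absorbing any bounded factors into $K$.

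\textbf{Anticipated obstacle.} None of the three steps is individually subtle; the main bookkeeping nuisance is just ensuring that the constants emerging from the sup-convolution (which depend on $\gamma$) are cleanly separated from the constants coming from the standard estimates on $v$ (which do not), so that the final bound has the two-term structure in \eqref{vgammadistance}. The semiconvexity step is the only one needing an algebraic reformulation; the remainder is a careful application of Lemma \ref{lemma1} plus the hypothesized estimates on $v$.
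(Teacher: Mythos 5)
Your proposal is correct and follows essentially the same route as the paper: sup-convolution semiconvexity via completing the square, the pointwise bound via \eqref{vvgammainequality} and \eqref{hatxbound}, and the modulus estimate via the triangle inequality through $v$ plus \eqref{vvgammadistance}, \eqref{estimate2}, \eqref{estimate3}. The only cosmetic difference is in the semiconvexity step, where you identify $v^\gamma(z)+\tfrac{1}{2\gamma^2}|z|^2$ as a supremum of affine functions while the paper verifies the convexity inequality directly by splitting the convex combination inside the sup; both are standard and equivalent.
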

\begin{proof}
By \eqref{vvgammainequality}, \eqref{estimate1} and \eqref{hatxbound}, we obtain 
\eqref{vupgammabound}.

By \eqref{estimate2} and \eqref{vvgammadistance}, we have
\begin{align*}
&\quad\;|v^\gamma(t_1,x_1,c_1)-v^\gamma(t_2,x_2,c_2)|\nn\\
&\leq |v^\gamma(t_1,x_1,c_1)-v(t_1,x_1,c_1)|+|v(t_1,x_1,c_1)-v(t_2,x_2,c_2)|+|v(t_2,x_2,c_2)-v^\gamma(t_2,x_2,c_2)|\nn\\
&\leq K\gamma^\frac{2\kappa'}{2-\kappa'}(1+|x_1|)^\frac{2}{2-\kappa'}+K\gamma^\frac{2\kappa'}{2-\kappa'}(1+|x_2|)^\frac{2}{2-\kappa'}\nn\\
&\quad\;+K\big(|x_1-x_2|+(1+|x_1|+|x_2|)|t_1-t_2|^{\frac{1}{2}}+(1+|x_1|+|x_2|)|c_1-c_2|^\kappa\big),
\end{align*}
which gives \eqref{vgammadistance}.

It is left to prove that $v^\gamma(t,x,c)+\frac{1}{2\gamma^2}(|t|^2+|x|^2+|c|^2)$ is convex w.r.t. $(t,x,c)$.
Denote
\begin{align*}
\phi(t,x,c)&=v^\gamma(t,x,c)+\frac{1}{2\gamma^2}(|t|^2+|x|^2+|c|^2)\\
&=\sup_{(\widetilde{t},\widetilde{x},\widetilde{c})\in\setq}\Big\{v(\widt,\widx,\widc)-\frac{1}{2\gamma^2}\big[(\widt(2t-\widt)+\widx(2x-\widx)+\widc(2c-\widc)\big]\Big\}.
\end{align*}
Then for any constant $\lambda\in (0,1)$ and $(t_1,x_1,c_1)$, $(t_2,x_2,c_2)\in\setq$, we have 
\begin{align*}
&\quad\;\phi(\lambda t_1+(1-\lambda)t_2,\lambda x_1+(1-\lambda)x_2,\lambda c_1+(1-\lambda)c_2)\\
&=\sup_{(\widt,\widx,\widc)\in\setq}\Big\{v(\widt,\widx,\widc)-\frac{1}{2\gamma^2}[(\widt(2\lambda t_1+2(1-\lambda)t_2-\widt)\\
&\quad\;+\widx(2\lambda x_1+2(1-\lambda)x_2-\widx)+d(2\lambda c_1+2(1-\lambda)c_2-d)]\Big\}\\
&=\sup_{(\widt,\widx,\widc)\in\setq}\Big\{\lambda v(\widt,\widx,\widc)-\lambda\frac{1}{2\gamma^2}[\widt(2t_1-\widt)+\widx(2x_1-\widx)+\widc(2c_1-\widc)]\\
&\quad\;+(1-\lambda)v(\widt,\widx,\widc)-(1-\lambda)\frac{1}{2\gamma^2}[\widt(2t_2-\widt)+\widx(2x_2-\widx)+\widc(2c_2-\widc)]\Big\}\\
&\leq \lambda \sup_{(\widt,\widx,\widc)\in\setq}\Big\{ v(\widt,\widx,\widc)- \frac{1}{2\gamma^2}[\widt(2t_1-\widt)+\widx(2x_1-\widx)+\widc(2c_1-\widc)]\Big\}\\
&\quad\;+(1-\lambda) \sup_{(\widt,\widx,\widc)\in\setq}\Big\{ v(\widt,\widx,\widc)-\frac{1}{2\gamma^2}[\widt(2t_2-\widt)+\widx(2x_2-\widx)+\widc(2c_2-\widc)]\Big\}\\
&= \lambda\phi(t_1,x_1,c_1)+(1-\lambda)\phi(t_2,x_2,c_2).
\end{align*}
This proved that $v^\gamma$ is semiconvex w.r.t. $(t,x,c)$.
\end{proof}

Similarly to Lemma \ref{lemma1} and Lemma \ref{lemma1.1}, we can give the corresponding concave estimate function of $v$.
\begin{lemma}\label{lemma2}
Suppose a function $v: \setq\to\R$ satisfy the estimates \eqref{estimate1}-\eqref{estimate3}. For any constant $0<\gamma\leq 1$, define
\begin{align}\label{defsemiconcave}
v_\gamma(t,x,c)=\inf_{(\widt,\widx,\widc)\in\setq}\Big\{v(\widt,\widx,\widc)+\frac{1}{2\gamma^2}[|t-\widt|^2+|x-\widx|^2+|c-\widc|^2]\Big\},~~(t,x,c)\in\setq.
\end{align}
Then for any $0<\gamma\leq 1$, $(t,x,c)\in\setq$, there exists a $(\hatt, \hatx,\hatc)\in\overline\setq$ such that
\begin{align}\label{existence2}
v_\gamma(t,x,c)=v(\hatt,\hatx,\hatc)+\frac{1}{2\gamma^2}[|t-\hatt|^2+|x-\hatx|^2+|c-\hatc|^2].
\end{align}
Moreover, there is a positive constant $K$, which is independent of $(t,x,c)$, $(\hatt,\hatx,\hatc)$ and $0<\gamma\leq 1$,  
such that the estimate \eqref{hatthatxhatcbound} holds. 
As a consequence,
\begin{align}\label{vvgammadistance2}
0\leq v(t,x,c)-v_\gamma(t,x,c)\leq K\gamma^\frac{2\kappa'}{2-\kappa'}(1+|x|)^\frac{2}{2-\kappa'}.
\end{align}
\end{lemma}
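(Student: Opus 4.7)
The plan is to mirror the argument of Lemma~\ref{lemma1} with the roles of $\sup$ and $\inf$ (and the associated sign in the penalty term) reversed. The key identity to keep in mind is that, in the sup-convolution case one extracted $v(\hatt,\hatx,\hatc)\geq v(t,x,c)$ from the trivial inequality $v^\gamma\geq v$, whereas here the trivial inequality $v_\gamma\leq v$, applied by plugging $(\widt,\widx,\widc)=(t,x,c)$ into the $\inf$, yields the reversed sign $v(t,x,c)\geq v(\hatt,\hatx,\hatc)$. Thus all Hölder comparisons can still be arranged so that the increment of $v$ is controlled by a nonnegative quantity.

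First I would establish existence of a minimizer. Fix $(t,x,c)\in\setq$ and consider the functional $F(\widt,\widx,\widc)=v(\widt,\widx,\widc)+\frac{1}{2\gamma^2}[|t-\widt|^2+|x-\widx|^2+|c-\widc|^2]$ on $\setq$. The bound $v(\widt,\widx,\widc)\geq -K(1+|\widx|)$ from \eqref{estimate1} shows that $F(\widt,\widx,\widc)\to+\infty$ as $|\widx|\to\infty$, while $\widt\in[0,T]$ and $\widc\in(\unc,\barc]$ are bounded. Any minimizing sequence is therefore bounded; by the continuity of $v$ on $\setq$ (which extends continuously to $\overline\setq$ thanks to the Hölder estimates \eqref{estimate2}-\eqref{estimate3}), a cluster point $(\hatt,\hatx,\hatc)\in\overline\setq$ realizes the infimum, giving \eqref{existence2}.

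Next, comparing \eqref{defsemiconcave} at the candidate point $(\widt,\widx,\widc)=(t,x,c)$ with \eqref{existence2} produces
$$\frac{1}{2\gamma^2}\bigl[|t-\hatt|^2+|x-\hatx|^2+|c-\hatc|^2\bigr]\leq v(t,x,c)-v(\hatt,\hatx,\hatc).$$
Applying \eqref{estimate1} to the right-hand side, completing the square exactly as in the proof of Lemma~\ref{lemma1} gives the intermediate bounds $|\hatx-x|\leq K(1+|x|)$ and $|\hatx|\leq K(1+|x|)$. I would then refine the Hölder control: writing
$$v(t,x,c)-v(\hatt,\hatx,\hatc)\leq |v(t,x,c)-v(\hatt,\hatx,c)|+|v(\hatt,\hatx,c)-v(\hatt,\hatx,\hatc)|$$
and invoking \eqref{estimate2}-\eqref{estimate3} gives
$$v(t,x,c)-v(\hatt,\hatx,\hatc)\leq K\bigl(|x-\hatx|+(1+|x|+|\hatx|)|t-\hatt|^{1/2}+(1+|x|+|\hatx|)|c-\hatc|^{\kappa}\bigr).$$
Inserting $|\hatx|\leq K(1+|x|)$ and bounding each first-power term $|x-\hatx|$ by $K(1+|x|)^{1-\kappa'}|x-\hatx|^{\kappa'}$, with $\kappa'=\kappa\wedge\tfrac12$, yields
$$\frac{1}{2\gamma^2}\bigl[|t-\hatt|^2+|x-\hatx|^2+|c-\hatc|^2\bigr]\leq K(1+|x|)\bigl(|t-\hatt|^2+|x-\hatx|^2+|c-\hatc|^2\bigr)^{\kappa'/2}.$$
Solving this scalar inequality for $S:=|t-\hatt|^2+|x-\hatx|^2+|c-\hatc|^2$ produces $S\leq K\gamma^{4/(2-\kappa')}(1+|x|)^{2/(2-\kappa')}$, which is exactly \eqref{hatthatxhatcbound}.

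Finally, the consequence \eqref{vvgammadistance2} is immediate: the lower bound $v(t,x,c)-v_\gamma(t,x,c)\geq 0$ follows from the trivial inequality $v_\gamma\leq v$, and the upper bound from
$$v(t,x,c)-v_\gamma(t,x,c)=\bigl(v(t,x,c)-v(\hatt,\hatx,\hatc)\bigr)-\frac{S}{2\gamma^2}\leq v(t,x,c)-v(\hatt,\hatx,\hatc),$$
together with the Hölder estimate applied once more, combined with the just-proven bound on $S$. The argument is mechanical once the sign issues are lined up correctly; the only genuine subtlety—and the step I expect to be the most delicate—is handling the existence of the minimizer and justifying $(\hatt,\hatx,\hatc)\in\overline\setq$ (as opposed to merely in $\setq$), since the range of $c$ in $\setq$ is half-open; this is resolved by extending $v$ continuously to $\overline\setq$ via the Hölder estimate \eqref{estimate3}.
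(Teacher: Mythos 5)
Your proposal is correct and follows essentially the same route as the paper, which omits the proof of Lemma~\ref{lemma2} precisely because it is the mirror image of Lemma~\ref{lemma1} under the sign-reversal $\sup\leftrightarrow\inf$ that you carry out. The sign bookkeeping (using $v_\gamma\le v$ to obtain $v(t,x,c)\ge v(\hatt,\hatx,\hatc)$, then applying the symmetric Hölder estimates \eqref{estimate2}--\eqref{estimate3}), the completion-of-square step, and the observation that the minimizer lies in $\overline\setq$ by the continuous extension of $v$ across $c=\unc$ all match what the paper intends.
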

\begin{lemma}\label{lemma2.1}
Suppose $0<\gamma\leq 1$, the functions $v$ and $v_\gamma$ are the same as in Lemma \ref{lemma2}. Then the function $v_\gamma$ is semiconcave w.r.t. $(t,x,c)$ and satisfies the following estimates:
\begin{align}
|v_\gamma(t_1,x_1,c_1)| &\leq K(1+|x_1|),\label{vgammadownbound}\\
|v_\gamma(t_1,x_1,c_1)-v_\gamma(t_2,x_2,c_2)| &\leq K\gamma^\frac{2\kappa'}{2-\kappa'}(1+|x_1|)^\frac{2}{2-\kappa'}+K\gamma^\frac{2\kappa'}{2-\kappa'}(1+|x_2|)^\frac{2}{2-\kappa'}.\nn\\
&\quad\;+K\big(|x_1-x_2|+(1+|x_1|+|x_2|)|t_1-t_2|^{\frac{1}{2}}\nn\\
&\quad\;+(1+|x_1|+|x_2|)|c_1-c_2|^\kappa\big),\label{vgammadistance}
\end{align} 
for all $(t_1,x_1,c_1)$, $(t_2,x_2,c_2)\in\setq$ and $0<\gamma\leq 1$.
\end{lemma}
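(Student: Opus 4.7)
The plan is to mirror the proof of \citelem{lemma1.1} with supremum replaced by infimum and the signs flipped accordingly. All three conclusions of the lemma have direct analogues established there for $v^\gamma$, so my strategy is to reuse the same outline: first the growth bound \eqref{vgammadownbound}, then the modulus of continuity \eqref{vgammadistance}, and finally semiconcavity. For \eqref{vgammadownbound}, I would take $(\widt,\widx,\widc) = (t_1,x_1,c_1)$ as a feasible point for the infimum defining $v_\gamma$ to get the upper bound $v_\gamma(t_1,x_1,c_1) \leq v(t_1,x_1,c_1) \leq K(1+|x_1|)$ via \eqref{estimate1}. For the lower bound, I would use the representation \eqref{existence2} from \citelem{lemma2}, writing
\[
v_\gamma(t_1,x_1,c_1) = v(\hatt,\hatx,\hatc) + \frac{1}{2\gamma^2}\bigl[|t_1-\hatt|^2+|x_1-\hatx|^2+|c_1-\hatc|^2\bigr] \geq v(\hatt,\hatx,\hatc) \geq -K(1+|\hatx|),
\]
and then controlling $|\hatx| \leq K(1+|x_1|)$ exactly as was done for $v^\gamma$ in \citelem{lemma1}: from \eqref{hatthatxhatcbound} one has $|x_1-\hatx| \leq K\gamma^{2/(2-\kappa')}(1+|x_1|)^{1/(2-\kappa')} \leq K(1+|x_1|)$, since $\gamma \leq 1$ and $1/(2-\kappa') \leq 1$.

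Second, the modulus of continuity \eqref{vgammadistance} follows at once from the triangle inequality
\[
|v_\gamma(t_1,x_1,c_1) - v_\gamma(t_2,x_2,c_2)| \leq \sum_{i=1,2} |v_\gamma(t_i,x_i,c_i) - v(t_i,x_i,c_i)| + |v(t_1,x_1,c_1) - v(t_2,x_2,c_2)|,
\]
where the two outer summands are controlled by \eqref{vvgammadistance2} and the middle summand by the standing hypotheses \eqref{estimate2}-\eqref{estimate3} on $v$. Third, to establish semiconcavity I would show that $(t,x,c) \mapsto v_\gamma(t,x,c) - \frac{1}{2\gamma^2}(|t|^2+|x|^2+|c|^2)$ is concave. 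Expanding the squared terms inside the infimum yields
\[
v_\gamma(t,x,c) - \frac{1}{2\gamma^2}(|t|^2+|x|^2+|c|^2) = \inf_{(\widt,\widx,\widc)\in\setq}\Bigl\{v(\widt,\widx,\widc) + \frac{1}{2\gamma^2}(|\widt|^2+|\widx|^2+|\widc|^2) - \frac{1}{\gamma^2}(t\widt + x\widx + c\widc)\Bigr\},
\]
which is the pointwise infimum over a family of functions that are affine (hence concave) in $(t,x,c)$, and is therefore concave.

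The whole argument is essentially routine given \citelem{lemma1.1}, so I do not anticipate any real obstacle; the main task is simply to verify that each sign and direction of inequality flips correctly when passing from sup-convolution to inf-convolution. The one point requiring mild care is the lower bound in \eqref{vgammadownbound}: invoking \eqref{vvgammadistance2} directly would produce a superlinear $(1+|x_1|)^{2/(2-\kappa')}$ term rather than the desired $K(1+|x_1|)$ growth, so going through the representation \eqref{existence2} is necessary.
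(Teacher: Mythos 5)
Your proposal is correct and follows essentially the same route the paper takes — the paper simply says the proofs of Lemmas \ref{lemma2} and \ref{lemma2.1} are "similar to that of Lemma \ref{lemma1} and Lemma \ref{lemma1.1}, so we omit them," and your argument is precisely that mirror image with sup replaced by inf and the inequalities reversed. The only stylistic difference is in the semiconcavity step: the paper verifies the convexity inequality for $v^\gamma+\frac{1}{2\gamma^2}(|t|^2+|x|^2+|c|^2)$ by plugging a convex combination into the sup and splitting, whereas you observe that $v_\gamma-\frac{1}{2\gamma^2}(|t|^2+|x|^2+|c|^2)$ is an infimum over a family of functions affine in $(t,x,c)$, hence concave. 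Both are standard and equivalent; yours is arguably cleaner. Your care in noting that the lower bound in \eqref{vgammadownbound} must go through the representation \eqref{existence2} rather than \eqref{vvgammadistance2} — so as to avoid the superlinear $(1+|x_1|)^{2/(2-\kappa')}$ growth — is exactly the right observation and matches the role that \eqref{hatxbound} plays in the proof of Lemma \ref{lemma1.1}.
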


\begin{proof}
The proofs of Lemma \ref{lemma2} and Lemma \ref{lemma2.1} are similar to that of Lemma \ref{lemma1} and Lemma \ref{lemma1.1}, so we omit them. 
\end{proof}

Combining \eqref{vvgammadistance} and \eqref{vvgammadistance2}, we can get an important estimate on the distance between $v^\gamma$ and $v_\gamma$: 
\begin{align}\label{updowngammadistance}
0\leq v^\gamma(t,x,c)-v_\gamma(t,x,c)\leq K\gamma^\frac{2\kappa'}{2-\kappa'}(1+|x|)^\frac{2}{2-\kappa'}.
\end{align}

Next, we define two functions which will be used to establish a new viscosity solution combined with semiconcave and semiconvex approximations.
For any $(t,x,c)\in\setq$, we define two functions: 
\begin{align}\label{defupgamma}
&\mathcal{L}^\gamma (t,x,c,p,P)=\inf_{(\widt,\widx,\widc)\in\setc(t,x,c)}\mathcal{G}(\widt,\widx,\widc,p,P),
\end{align}
and
\begin{align}\label{defdowngamma}
&\mathcal{L}_\gamma (t,x,c,p,P)=\sup_{(\widt,\widx,\widc)\in\setc(t,x,c)}\mathcal{G}(\widt,\widx,\widc,p,P),
\end{align}
where the set $\setc(t,x,c)$ is defined as 
\begin{align*}
\setc(t,x,c)=\Big\{(\hatt,\hatx,\hatc)\in\setq~\Big|~\frac{1}{2\gamma^2}[|t-\hatt|^2+|x-\hatx|^2+|c-\hatc|^2]\leq K\gamma^\frac{2\kappa'}{2-\kappa'}(1+|x|)^\frac{2}{2-\kappa'}\Big\},
\end{align*}
where $K$ is any sufficiently large constant so that all the estimates in Lemmas \ref{lemma1}, \ref{lemma1.1} \ref{lemma2}, and \ref{lemma2.1} hold.  

We now derive the HJB equations for the semiconvex and semiconcave approximations $v^\gamma$ and $v_{\gamma}$. 
\begin{lemma}\label{gammasupersubviscosity}
Suppose Assumption \ref{assumption} hold and $0<\gamma\leq 1$.
If $v\in C(\setq)$ is a viscosity subsolution to \eqref{HJB}, then the function $v^\gamma$ defined in Lemma \ref{lemma1} is a viscosity subsolution of the following
\begin{align}\label{subHJB}
\begin{cases}
&\max\{-v_t+\mathcal{L}^\gamma(t,x,c,-v_x,-v_{xx}), -v_c\}=0, \\
&v\big|_{t=T}=v^\gamma(T,x,c),\\
&v\big|_{c=\barc}=v^\gamma(t,x,\barc), ~~~~(t,x,c)\in\setq.
\end{cases}
\end{align}
Likewise, if $v\in C(\setq)$ is a viscosity supersolution to $\eqref{HJB}$, then the function $v_\gamma$ defined in Lemma \ref{lemma2} is a viscosity supersolution of the following:
\begin{align}\label{supHJB}
\begin{cases}
&\max\{-v_t+\mathcal{L}_\gamma(t,x,c,-v_x,-v_{xx}), -v_c\}=0,\\
&v\big|_{t=T}=v_\gamma(T,x,c), \\
&v\big|_{c=\barc}=v_\gamma(t,x,\barc), ~~~~(t,x,c)\in\setq.
\end{cases}
\end{align}
\end{lemma}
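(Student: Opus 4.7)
The strategy is the classical sup-convolution argument for viscosity subsolutions (and symmetrically, the inf-convolution for viscosity supersolutions). The localization set $\setc(t,x,c)$ in \eqref{defupgamma}--\eqref{defdowngamma} arises naturally because a test function for $v^\gamma$ at a point $(t_0,x_0,c_0)$ only controls $v$ at the sup-convolution maximizer, which lies in $\setc(t_0,x_0,c_0)$ by \citelem{lemma1} and the estimate \eqref{hatthatxhatcbound}. Since the boundary conditions in \eqref{subHJB} and \eqref{supHJB} are tautological, only the PDE inequality at interior points of $\setq$ requires verification; I detail the subsolution case, the supersolution case being symmetric.

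Fix $(t_0,x_0,c_0)\in\setq$ with $t_0<T$ and let $\phi\in C^{1,2,1}(\setq)$ be a subsolution test function for $v^\gamma$ there. By \citelem{lemma1} the supremum in \eqref{defsemiconvex} is attained at some $(\hatt,\hatx,\hatc)\in\overline\setq$ whose distance to $(t_0,x_0,c_0)$ is controlled by \eqref{hatthatxhatcbound}, so that $(\hatt,\hatx,\hatc)\in\setc(t_0,x_0,c_0)$. Define the shifted test function
\[
\psi(t,x,c) := \phi\big(t+(t_0-\hatt),\,x+(x_0-\hatx),\,c+(c_0-\hatc)\big) + \tfrac{1}{2\gamma^2}\big[|t_0-\hatt|^2+|x_0-\hatx|^2+|c_0-\hatc|^2\big]
\]
on a neighborhood of $(\hatt,\hatx,\hatc)$. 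Specializing the sup in \eqref{defsemiconvex} to the diagonal $(\widt,\widx,\widc)=(t,x,c)$ and invoking the global inequality $v^\gamma\le\phi$ gives $v\le\psi$ on this neighborhood, with equality at $(\hatt,\hatx,\hatc)$. Assuming $\hatt<T$, the viscosity subsolution property of $v$ at $(\hatt,\hatx,\hatc)$ applied to $\psi$ yields
\[
\max\big\{-\psi_t+\mathcal{G}(\hatt,\hatx,\hatc,-\psi_x,-\psi_{xx}),\,-\psi_c\big\}\big|_{(\hatt,\hatx,\hatc)}\le 0.
\]
Because $\psi$ is a pure translation of $\phi$ plus an additive constant, the partial derivatives of $\psi$ at $(\hatt,\hatx,\hatc)$ coincide with those of $\phi$ at $(t_0,x_0,c_0)$. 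Combined with the inequality $\mathcal{L}^\gamma(t_0,x_0,c_0,-\phi_x,-\phi_{xx})\le\mathcal{G}(\hatt,\hatx,\hatc,-\phi_x,-\phi_{xx})$, which follows from $(\hatt,\hatx,\hatc)\in\setc(t_0,x_0,c_0)$ and the definition \eqref{defupgamma}, this delivers exactly the max inequality demanded by \eqref{subHJB} at $(t_0,x_0,c_0)$.

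The main technical obstacle is a sup-convolution maximizer on the parabolic boundary, most critically $\hatt=T$, where the viscosity subsolution property of $v$ is unavailable at $(\hatt,\hatx,\hatc)$. I plan to resolve this by an approximation argument: work with the restricted sup-convolution
\[
v^{\gamma,\eta}(t,x,c) := \sup_{(\widt,\widx,\widc)\in[0,T-\eta]\times\rn\times(\unc,\barc-\eta]}\Big\{v(\widt,\widx,\widc)-\tfrac{1}{2\gamma^2}\big[|t-\widt|^2+|x-\widx|^2+|c-\widc|^2\big]\Big\},\quad \eta>0,
\]
whose maximizer is automatically in the interior. Applying the argument above to $v^{\gamma,\eta}$ and passing to the limit $\eta\to 0^+$, using the local uniform convergence $v^{\gamma,\eta}\uparrow v^\gamma$ (from continuity of $v$ on $\overline\setq$) together with viscosity stability and joint continuity of $\mathcal{L}^\gamma$ in its arguments, transfers the required inequality to $v^\gamma$. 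The supersolution claim for $v_\gamma$ follows the same blueprint, exchanging sup with inf, negating the additive constant in $\psi$, and using the supremum in \eqref{defdowngamma} to bound $\mathcal{G}(\hatt,\hatx,\hatc,\cdot,\cdot)$ from above by $\mathcal{L}_\gamma$.
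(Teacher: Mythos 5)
Your core argument---shifting the test function by the displacement $(t_0-\hatt,\,x_0-\hatx,\,c_0-\hatc)$, observing that the sup-convolution maximizer $(\hatt,\hatx,\hatc)$ lies in $\setc(t_0,x_0,c_0)$ thanks to \eqref{hatthatxhatcbound}, applying the viscosity inequality of $v$ at $(\hatt,\hatx,\hatc)$, and then using the infimum defining $\mathcal{L}^\gamma$ over $\setc(t_0,x_0,c_0)$---is exactly the paper's argument. (Your extra additive constant in $\psi$ simply restores the ``maximum value $0$'' normalization the paper's own definition demands but the paper's shifted test function $\varphi$ in \eqref{defofvarphi} silently drops; since adding a constant does not affect the test, this is cosmetic.)

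Where you genuinely diverge is in explicitly flagging the possibility that the maximizer lies on the parabolic boundary $\hatt=T$ (or on the open face $\hatc=\unc$), where the viscosity subsolution property of $v$ is unavailable. The paper's proof skips this: it begins ``Suppose $(\hatt,\hatx,\hatc)\in\setq$'' and then applies the viscosity definition at $(\hatt,\hatx,\hatc)$, an application that additionally requires $\hatt<T$, and neither is guaranteed by \citelem{lemma1}, which only places the maximizer in $\overline\setq$. Catching this is a real improvement. Two caveats on your proposed repair. First, the truncation in the $c$-variable should push away from $\unc$, i.e.\ take $[\unc+\eta,\barc]$ rather than $(\unc,\barc-\eta]$: the face $c=\barc$ belongs to $\setq$ and is a legitimate test point, whereas $c=\unc$ does not; your current truncation leaves the troublesome end open while needlessly excising the harmless one. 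Second, you still need to verify that the truncated sup-convolution $v^{\gamma,\eta}$ inherits the localization estimate \eqref{hatthatxhatcbound} (so that its maximizer lands in the same set $\setc(t_0,x_0,c_0)$ used to define $\mathcal{L}^\gamma$), and you should make the viscosity-stability step under $v^{\gamma,\eta}\uparrow v^\gamma$ explicit---local uniform convergence of the sup-convolutions plus upper semicontinuity of $\mathcal{L}^\gamma$ suffices, but it is not free. An alternative that avoids truncation entirely is to note from \eqref{hatthatxhatcbound} that $|\hatt-t_0|\le K_{\alpha\gamma}$ with $K_{\alpha\gamma}\to 0$ as $\gamma\to 0$, so the lemma holds verbatim for $t_0< T-K_{\alpha\gamma}$; this weaker statement is all that is used in the proof of \citethm{comparison}, where $\gamma$ is sent to zero before the spatial parameters.
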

\begin{proof}
For any point $(t,x,c)\in\setq$ with $t<T$,
let $\phi$ be a viscosity subsolution test function for $v^{\gamma}$ at the point $(t,x,c)$. 
Suppose $(\hatt,\hatx,\hatc)\in\setq$ satisfies \eqref{existence}. From \eqref{hatthatxhatcbound}, we know $(\hatt,\hatx,\hatc)\in\setc(t,x,c)$.  
Then, whenever $(\tau,\zeta,\eta)\in\setq$, one has 
\begin{align*}
v(\hatt,\hatx,\hatc)-\phi(t,x,c)&=v^\gamma(t,x,c)-\phi(t,x,c)+\frac{1}{2\gamma^2}(|t-\hatt|^2+|x-\hatx|^2+|c-\hatc|^2)\\
&\ge v^\gamma(\tau+t-\hatt,\zeta+x-\hatx,\eta+c-\hatc)-\phi(\tau+t-\hatt,\zeta+x-\hatx,\eta+c-\hatc)\\
&\quad\;+\frac{1}{2\gamma^2}(|t-\hatt|^2+|x-\hatx|^2+|c-\hatc|^2)\\
&\ge v(\tau,\zeta,\eta)-\phi(\tau+t-\hatt,\zeta+x-\hatx,\eta+c-\hatc)
\end{align*}
Set
\begin{align}\label{defofvarphi}
\varphi(\tau,\zeta,\eta)=\phi(\tau+t-\hatt,\zeta+x-\hatx,\eta+c-\hatc).
\end{align} Then
\begin{align}
v(\hatt,\hatx,\hatc)-\varphi(\hatt,\hatx,\hatc)\geq v(\tau,\zeta,\eta)-\varphi(\tau,\zeta,\eta).
\end{align}
Thus $\varphi$ is a viscosity subsolution test function for $v$ at the point $(\hatt,\hatx,\hatc)$. From the definition of the viscosity subsolution, we know that 
\begin{align}\label{equation2}
\max\big\{-\varphi_t+\mathcal{G}(\hatt,\hatx,\hatc,-\varphi_x,-\varphi_{xx}),-\varphi_c\big\}\big|_{(
\hatt,\hatx,\hatc)}\leq 0.
\end{align}
by \eqref{defofvarphi}, we know that 
\begin{align}\label{equation1}
&\phi(t,x,c)=\varphi(\hatt,\hatx,\hatc),\nn\\
&\phi_t(t,x,c)=\varphi_t(\hatt,\hatx,\hatc),\nn\\
&\phi_x(t,x,c)=\varphi_x(\hatt,\hatx,\hatc),\nn\\
&\phi_{xx}(t,x,c)=\varphi_{xx}(\hatt,\hatx,\hatc).
\end{align}
From the definition of $\mathcal{L}^\gamma(t,x,c,-\phi_x,-\phi_{xx})$ in \eqref{defupgamma}, we have
\begin{align*}
-\phi_t(t,x,c)+&\mathcal{L}^\gamma(t,x,c,-\phi_x,-\phi_{xx})\\
=-\phi_t(t,x,c)+&\inf_{(\widt,\widx,\widc)\in\setc(t,x,c)}\mathcal{G}(\widt,\widx,\widc,-\phi_x(t,x,c),-\phi_{xx}(t,x,c)).
\end{align*}
Since $(\hatt,\hatx,\hatc)\in\setc(t,x,c)$, we have
\begin{align*}
-\phi_t(t,x,c)+&\inf_{(\widt,\widx,\widc)\in\setc(t,x,c)}\mathcal{G}(\widt,\widx,u,-\phi_x(t,x,c),-\phi_{xx}(t,x,c))\\
\leq-\phi_t(t,x,c)+&\mathcal{G}(\hatt,\hatx,\hatc,-\phi_x(t,x,c),-\phi_{xx}(t,x,c)).
\end{align*}
From \eqref{equation1}, the equation above becomes
\begin{align*}
-\varphi_t(\hatt,\hatx,\hatc)+&\mathcal{G}(\hatt,\hatx,\hatc,-\varphi_x(\hatt,\hatx,\hatc),-\varphi_{xx}(\hatt,\hatx,\hatc)).
\end{align*}
which is nonpositive by \eqref{equation2}. And $-\phi_c\leq 0$ is obvious.
This proves that $v^\gamma$ is a viscosity subsolution of \eqref{subHJB}. In a similar way, we can prove that $v_\gamma$ is a viscosity supersolution of \eqref{supHJB}.
\end{proof}

\begin{lemma}\label{Alexandrov}
(Alexandrov's Theorem \cite{aleksandorov1939almost}) Let $Q\subseteq\R^n$ be a convex body and $\phi:Q\rightarrow \R$ a semiconvex (or semiconcave) function on $Q$. Then there exists a set $M\subset Q$ with the Lebesgue $|M|=0$ such that $\phi$ is twice differerntiable at any $x\in Q\setminus M$, i.e., there are $(p,P)\in\R^n\times S^n$ depending on $x$ such that
\begin{align*}
\phi(x+y)=\phi(x)+\langle p,y \rangle+\frac{1}{2}\langle Py,y \rangle+o(|y|^{2}),
~~\mbox{as $|y|\to 0$,}
\end{align*} 
where $S^n$ represent the set of all $(n\times n)$ symmetric matrices. 
\end{lemma}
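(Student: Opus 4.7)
The plan is to reduce the semiconvex/semiconcave case to the purely convex case and then invoke the standard real-analytic machinery. If $\phi$ is semiconvex on the convex body $Q$ with constant $K$, then $\psi(x) := \phi(x) + K|x|^2$ is convex on $Q$; the quadratic perturbation is smooth with constant Hessian $2K I_n$, so $\phi$ admits the required second-order Taylor expansion at $x$ if and only if $\psi$ does, with Hessians differing by $2K I_n$. The semiconcave case follows by applying the convex result to $-\phi$. It therefore suffices to prove Alexandrov's theorem for a convex function $\psi: Q \to \R$.

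I would then proceed in two stages. First, I would invoke the elementary facts that a convex function is locally Lipschitz in the interior of its domain, and its subdifferential $\partial \psi(x)$ is a nonempty convex compact set at every interior point. By Rademacher's theorem applied to the locally Lipschitz $\psi$, there is a Lebesgue-null set $M_1 \subset Q$ outside of which $\psi$ is Fr\'echet differentiable, so that $\partial \psi(x) = \{\nabla \psi(x)\}$ for every $x \in Q \setminus M_1$. Second, I would differentiate the gradient map. The key structural input is that $\nabla \psi$, defined almost everywhere, is a monotone map whose distributional derivative is a locally finite matrix-valued measure with positive-semidefinite values (an equivalent formulation of convexity). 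Hence $\nabla \psi$ lies in $BV_{\mathrm{loc}}(Q;\R^n)$, and by the Calder\'on--Zygmund / Lebesgue--Besicovitch differentiation theory applied to BV vector fields, there is a further null set $M_2$ such that at every $x \in Q \setminus (M_1 \cup M_2)$ the map $\nabla \psi$ has an approximate symmetric derivative $P(x)$, which is automatically positive semidefinite.

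The main obstacle, and the step requiring real work, is upgrading pointwise approximate differentiability of the gradient into the pointwise second-order Taylor expansion of $\psi$. Here I would exploit the monotonicity inequality $\langle q - \nabla \psi(x), y \rangle \geq 0$, valid for every $q \in \partial \psi(x+y)$, combined with the approximate-derivative representation $q = \nabla \psi(x) + P(x)y + o(|y|)$ that holds away from a set of vanishing density at $x$. Integrating along the segment from $x$ to $x+y$ and applying Fubini together with the fundamental theorem of calculus for convex functions (which recovers $\psi$ from any measurable subgradient selection) yields
\begin{align*}
\psi(x+y) - \psi(x) - \langle \nabla \psi(x), y \rangle - \tfrac{1}{2}\langle P(x) y, y \rangle = o(|y|^2), \quad \mbox{as } |y| \to 0.
\end{align*}
Setting $M := M_1 \cup M_2$, which is Lebesgue null, and translating back to $\phi$ by subtracting the smooth quadratic $\pm K|x|^2$ completes the proof, with the final $P$ in the statement given by $P(x) \mp 2K I_n$.
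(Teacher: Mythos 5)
The paper does not prove this lemma; it is stated as a citation to Alexandrov's 1939 paper and used as a black box in the comparison argument, so there is no in-paper proof to compare against. Taken as a freestanding sketch, your proposal follows a standard modern route (reduce to the convex case by the quadratic perturbation, Rademacher for a.e.\ first-order differentiability, the BV structure of $\nabla\psi$ coming from monotonicity, Calder\'on--Zygmund approximate differentiability of BV maps, and finally monotonicity to upgrade to a genuine second-order Taylor expansion); this is essentially the Evans--Gariepy line of argument. The reduction, the Rademacher step, and the BV/approximate-differentiability step are all correctly identified.

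The one genuine gap is in the final upgrade. Approximate differentiability of $\nabla\psi$ at $x$ gives the representation $q=\nabla\psi(x)+P(x)y+o(|y|)$ only for $y$ outside a set of vanishing $n$-dimensional Lebesgue density at $x$, and such an exceptional set can still contain every point of a given segment emanating from $x$. So ``integrating along the segment from $x$ to $x+y$'' and invoking the fundamental theorem of calculus does not close the argument as written: you must integrate over a thin cone or tube around the direction of $y$, use Fubini together with the $n$-dimensional density estimate to control the contribution of the exceptional set, and then let the tube shrink --- and it is exactly here that the one-sided (monotonicity/convexity) bounds are indispensable, because they convert an approximate limit into a genuine pointwise limit. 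Your sketch mentions Fubini, but the way it is phrased applies it along a single line, which is precisely where a one-dimensional slice of a Lebesgue-null set can hide. Filling in that tube-averaging step would complete the proof.
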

\begin{lemma}\label{Jensen}
(Jensen's Lemma \cite{jensen1988the}) Let $Q\subseteq\R^n$ be a convex open set and $\phi:Q\rightarrow \R$ a semiconvex function, and $\bar{x}\in Q$ a local strict maximum of $\phi$. Then, for any constants $r,\delta>0$, the set
\begin{align*}
\mathcal{K}(\bar{x})=\big\{x\in\R^n~\big|~ &|x-\bar{x}|\leq r, ~
\mbox{there exists some $p\in\R^n$ with $0< |p|< \delta$ such that }\\
&\mbox{the function $\phi_p(\cdot)\equiv\phi(\cdot)+\langle p,\cdot\rangle$ attains a strict local maximum at $x$} \big\}
\end{align*}
has a positive Lebesgue measure. 
\end{lemma}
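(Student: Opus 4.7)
I would prove Jensen's Lemma by combining Alexandrov's second differentiability (Lemma \ref{Alexandrov}) with a quantitative inverse change-of-variables bound that exploits the two-sided Hessian control available from semiconvexity together with second-order maximality. Translating so $\bar x = 0$, I first shrink $r$ so that $\bar B_r \subset Q$ and $\phi(0) > \phi(x)$ on $0 < |x| \leq r$ (possible since semiconvex functions are continuous and $\bar x$ is a strict local maximum), and put $\alpha := \phi(0) - \max_{|x|=r}\phi(x) > 0$ and $\delta_1 := \min\{\delta, \alpha/(2r)\}$. For each $p$ with $0 < |p| < \delta_1$, the auxiliary function $\phi_p(x) := \phi(x) + \langle p, x \rangle$ equals $\phi(0)$ at $0$ but is at most $\phi(0) - \alpha/2$ on $\{|x|=r\}$, so its maximum over $\bar B_r$ is attained at an interior point $x_p \in B_r$. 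A standard genericity argument (the $p$ for which $\phi_p$ has non-unique maximizer form a null set, or, alternatively, pre-perturb by $-\eta|x|^2$ and let $\eta \downarrow 0$ with uniform estimates) upgrades $x_p$ to a strict local maximizer, placing it in $\mathcal{K}(\bar x)$.

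\textbf{Hessian bound and inverse map.} Write $\phi = \psi - K|\cdot|^2/2$ with $\psi$ convex on $B_r$ (semiconvexity). Lemma \ref{Alexandrov} gives twice differentiability outside a Lebesgue-null set $M \subset B_r$. At any $x_p \in \mathcal{K}(\bar x) \setminus M$, the first- and second-order Alexandrov-sense optimality for $\phi_p$ yield $\nabla\phi(x_p) = -p$ and $D^2\phi(x_p) \leq 0$, while convexity of $\psi$ gives $D^2\phi(x_p) \geq -KI$. Hence the eigenvalues of $D^2\phi(x_p)$ lie in $[-K,0]$ and in particular $|\det D^2\phi(x_p)| \leq K^n$. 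The measurable map $F(x) := -\nabla\phi(x)$, well defined a.e. on $\mathcal{K}(\bar x)$, sends $\mathcal{K}(\bar x)$ onto a set containing $B_{\delta_1} \setminus \{0\}$, since every admissible $p$ equals $F(x_p)$ for its corresponding maximizer.

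\textbf{Area formula and main obstacle.} The change-of-variables inequality for the gradient of a convex function then produces
\begin{align*}
|B_{\delta_1}| = |B_{\delta_1}\setminus\{0\}| \leq \int_{\mathcal{K}(\bar x)} |\det D^2\phi(x)|\,dx \leq K^n\,|\mathcal{K}(\bar x)|,
\end{align*}
whence $|\mathcal{K}(\bar x)| \geq K^{-n}|B_{\delta_1}| > 0$, as required. The heart of the argument, and the main technical obstacle, is the first inequality above: because $\phi$ is only a.e. twice differentiable, the classical area formula does not apply verbatim. The standard remedy is mollification, approximating $\psi$ by smooth convex $\psi_\epsilon$ (convolution preserves convexity), applying the classical change of variables to the smooth semiconvex $\phi_\epsilon := \psi_\epsilon - K|\cdot|^2/2$ (where $\nabla\phi_\epsilon$ is a $C^1$ map with Jacobian $\det D^2\phi_\epsilon$), and passing to $\epsilon \downarrow 0$ using Alexandrov's a.e.\ identification of $D^2\phi$ and dominated convergence on the uniformly bounded integrand; equivalently, one reads the bound as a Monge-Amp\`ere density estimate for $\psi$. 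Once this step is secured, every other step in the argument is soft maximum-principle and perturbation theory.
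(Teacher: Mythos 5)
The paper does not prove this lemma; it cites it to Jensen (1988) and uses it as a black box in the comparison theorem, so there is no ``paper's own proof'' to compare against. Your argument is the standard textbook proof (as in Jensen's original paper, the Crandall--Ishii--Lions ``User's Guide'' appendix, or Fleming--Soner): translate to the origin, shrink the ball so the strict local maximum dominates the boundary by a gap $\alpha$, observe that small linear perturbations push the maximizer into the interior, use Alexandrov's theorem to get two-sided Hessian control $-KI \leq D^2\phi \leq 0$ at a.e.\ maximizer, and conclude via a Monge--Amp\`{e}re / area-formula estimate that the maximizers occupy measure at least $K^{-n}|B_{\delta_1}|$. All the steps check out, including the first-order identity $\nabla\phi(x_p) = -p$ and the determinant bound $|\det D^2\phi| \leq K^n$.

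Two small remarks worth making explicit if this were written out in full. First, the strictness required by the paper's statement of $\mathcal{K}(\bar{x})$ is genuinely an extra hypothesis compared with the most common formulation (which only asks for a maximum); your genericity route works, and the cleanest version of it is to note that $p \mapsto \max_{\bar{B}_r}\phi_p$ is convex, hence differentiable a.e., and at a differentiability point the maximizer over $\bar{B}_r$ is unique, which makes it a strict interior local maximum; removing a $p$-null set does not affect the final volume bound. Second, the area-formula step for the a.e.-defined map $-\nabla\phi$ does need justification beyond the classical change of variables; your mollification route (convolve the convex part, apply the smooth area formula, pass to the limit using the uniform bound $|\det D^2\phi_\epsilon| \leq K^n$ and the a.e.\ Alexandrov identification) is the standard remedy, and it is genuinely the technical heart of Jensen's proof. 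With those two points spelled out, the proposal is complete and correct.
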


Now we can prove the main theorem in this section.
\begin{thm}[Comparison theorem]\label{comparison}
Assume Assumptions \ref{assumption} and \ref{assumption2} hold. Suppose $v$ is a viscosity subsolution and $\hatv$ is a viscosity supersolution of the HJB equation \eqref{HJB}, both satisfying the standard estimates \eqref{estimate1}-\eqref{estimate3}. Then $v(t,x,c)\leq\hatv(t,x,c)$ for all $(t,x,c)\in\setq$. 
\end{thm}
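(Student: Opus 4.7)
The plan is to argue by contradiction via the classical semiconvex/semiconcave regularization scheme. Assuming $\sup_{\setq}(v-\hatv)>0$, I will first replace $v$ and $\hatv$ by their inf/sup-convolutions $v^\gamma$ and $\hatv_\gamma$ from Lemmas \ref{lemma1}--\ref{lemma2.1}. Since $v^\gamma\ge v$ and $\hatv_\gamma\le\hatv$, the difference $v^\gamma-\hatv_\gamma$ still has strictly positive supremum, and by \citelem{gammasupersubviscosity} it inherits the sub/super-solution properties with respect to the modified operators $\mathcal{L}^\gamma$ and $\mathcal{L}_\gamma$.

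Next I introduce the penalized auxiliary function
\[
\Phi(t,x,c):=v^\gamma(t,x,c)-\hatv_\gamma(t,x,c)-\lambda(1+|x|^2)-\mu(T-t)-\nu(\barc-c),
\]
with small positive constants $\lambda,\mu,\nu$. The quadratic $x$-penalty defeats the linear growth of $v^\gamma-\hatv_\gamma$, while the two linear penalties in $t$ and $c$ produce strictly signed first-order conditions at any interior maximum. Because the boundary contributions of $v^\gamma-\hatv_\gamma$ at $t=T$ and $c=\barc$ are controlled by the $\gamma$-dependent errors from \eqref{vvgammadistance}--\eqref{vvgammadistance2}, for $\gamma$ small enough relative to $\lambda,\mu,\nu$ the supremum of $\Phi$ is strictly positive and attained at some interior point $(t^*,x^*,c^*)\in\setq$ with $t^*<T$ and $c^*<\barc$.

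Since $v^\gamma$ is semiconvex and $\hatv_\gamma$ is semiconcave, $\Phi$ is semiconvex near $(t^*,x^*,c^*)$. Applying Jensen's lemma (\citelem{Jensen}) together with Alexandrov's theorem (\citelem{Alexandrov}), I will find a linear perturbation $\langle p,(t,x,c)\rangle$ with $|p|<\delta$ such that $\Phi+\langle p,\cdot\rangle$ attains a strict local maximum at a point $(\widt,\widx,\widc)$ at which both $v^\gamma$ and $\hatv_\gamma$ are classically twice differentiable. The first- and second-order necessary conditions there give
\[
\partial_t v^\gamma-\partial_t\hatv_\gamma=-\mu-p_t,\quad \partial_x v^\gamma-\partial_x\hatv_\gamma=2\lambda\widx-p_x,\quad \partial_c v^\gamma-\partial_c\hatv_\gamma=-\nu-p_c,
\]
together with $\partial_{xx}(v^\gamma-\hatv_\gamma)\le 2\lambda I$. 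The pivotal role of the $\nu$-penalty now appears: combining $\partial_c v^\gamma\ge 0$ (from the sub-solution) with the $c$-identity yields $\partial_c\hatv_\gamma\ge\nu-\delta>0$ for $\delta<\nu$, so the $-v_c$ branch of the super-solution max is strictly negative and the first branch must hold, giving $\partial_t\hatv_\gamma\le\mathcal{L}_\gamma(\widt,\widx,\widc,-\partial_x\hatv_\gamma,-\partial_{xx}\hatv_\gamma)$. This is the step where the variational-inequality ambiguity is broken and the problem reduces to a PDE-style comparison.

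Subtracting from the sub-solution bound $\partial_t v^\gamma\ge\mathcal{L}^\gamma(\widt,\widx,\widc,-\partial_x v^\gamma,-\partial_{xx} v^\gamma)$ and inserting the $t$-identity yields $\mathcal{L}_\gamma-\mathcal{L}^\gamma\ge\mu-\delta$. The final step is to upper-bound $\mathcal{L}_\gamma-\mathcal{L}^\gamma$ via \eqref{defupgamma}--\eqref{defdowngamma}, using the pointwise identity
\[
\mathcal{G}(\cdot,p_2,P_2)-\mathcal{G}(\cdot,p_1,P_1)=\tfrac{1}{2}\operatorname{tr}\bigl((P_2-P_1)\sigma\sigma^T\bigr)+\langle b,p_2-p_1\rangle
\]
(with $(p_1,P_1)=(-\partial_x v^\gamma,-\partial_{xx} v^\gamma)$ and $(p_2,P_2)=(-\partial_x\hatv_\gamma,-\partial_{xx}\hatv_\gamma)$), together with $P_2-P_1\le 2\lambda I$, $|p_2-p_1|\le 2\lambda|\widx|+\delta$, and the H\"older-in-$t$ oscillation estimates for $\sigma,b,f$ over $\setc(\widt,\widx,\widc)$ (where Assumption \ref{assumption2} is essential). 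This will produce an inequality $\mu\le\omega(\lambda,\delta,\gamma,|\widx|)$; sending the five small parameters $\gamma,\lambda,\mu,\nu,\delta$ to zero in the correct order then contradicts $\mu>0$. This parameter orchestration is the main obstacle of the proof: the unbounded spatial domain forces $|\widx|$ to potentially blow up as $\lambda\to 0$ so that only the composite quantity $\lambda(1+|\widx|^2)$ remains tractable, while the individual Hessians of the regularizations scale as $1/\gamma^2$, so the estimates must rely exclusively on the \emph{difference} bound $\partial_{xx}(v^\gamma-\hatv_\gamma)\le 2\lambda I$ and on the affineness of $\mathcal{G}$ in $(p,P)$.
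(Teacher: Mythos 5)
Your skeleton (sup/inf-convolutions, Lemma \ref{gammasupersubviscosity}, Jensen--Alexandrov at a penalized maximum, and a small linear tilt in $c$ to rule out the gradient-constraint branch of the supersolution) coincides in spirit with the paper's, and your $\nu$-argument is essentially the paper's $\beta$-penalty argument. The genuine gap is that you drop the doubling of variables and then, at the decisive step, try to bound $\mathcal{L}_\gamma-\mathcal{L}^\gamma$ ``exclusively'' via the difference bounds $P_2-P_1\le 2\lambda I$, $|p_2-p_1|\le 2\lambda|\widx|+\delta$ and the affineness of $\mathcal{G}$ in $(p,P)$. This cannot work: by \eqref{defupgamma}--\eqref{defdowngamma} the two operators are a supremum and an infimum of $\mathcal{G}$ over $\setc(\widt,\widx,\widc)$, so any upper bound is of the form $\mathcal{G}(z,p_2,P_2)-\mathcal{G}(z',p_1,P_1)$ with two \emph{different} coefficient points $z\neq z'$, and after splitting you are left with terms such as $\tfrac12\operatorname{tr}\bigl(P_1(\sigma\sigma^\top(z)-\sigma\sigma^\top(z'))\bigr)$ and $\langle b(z)-b(z'),\,p_1\rangle$ which carry the \emph{individual} derivatives of the convolutions, not their differences. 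At your maximum point one only has $\|P_1\|\lesssim\gamma^{-2}$ (semiconvexity/semiconcavity), while the coefficient oscillation over $\setc$ is of order $(1+|\widx|)K_{\alpha\gamma}^{\kappa}\sim(1+|\widx|)\gamma^{2\kappa/(2-\kappa')}$ by \eqref{defofkalphagamma}; since $2\kappa/(2-\kappa')<2$ for every $\kappa\in(0,1]$, the product blows up as $\gamma\to0$, and the inequality $\mu\le\omega(\lambda,\delta,\gamma,|\widx|)$ you aim for does not follow. This is precisely the obstruction the paper's doubling penalty $\frac1{2\epsilon}(|t-s|^2+|c-d|^2)+\frac1{2\delta}|x-y|^2$ removes: the two-point matrix inequality \eqref{vxxvyy} is tested against the pair $(\sigma(t^*,x^*,c^*),\sigma(s^*,y^*,d^*))$, yielding $\frac1\delta|\Delta\sigma|^2$, and the dangerous gradient enters only as $\frac{\hatx_0-\haty_0}{\delta}$ multiplied by a coefficient \emph{difference}, so everything vanishes after sending $\epsilon+\gamma\to0$ first and then $\delta\to0$ (using the estimate that $\frac1\delta|x_0-y_0|^2\to0$, the paper's \eqref{estimate4}), without ever using the size of an individual Hessian.

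Two further points would still need repair even if the above were fixed. First, even the ``good'' part of your bound, $\lambda|\sigma(z)|^2+|b(z)|(2\lambda|\widx|+\delta)\approx \lambda L^2(1+|\widx|)^2$, is not small: your penalty only guarantees $\lambda|\widx|^2=O(1)$, so this term is of order $L^2$ uniformly in all parameters and never contradicts $\mu>0$. The paper neutralizes exactly this by making the quadratic weight time-dependent, $\alpha\frac{2T+2\rho-t-s}{2T+2\rho}(|x|^2+|y|^2)$, whose $t$-derivative adds $\frac{\alpha}{2T+2\rho}(|x|^2+|y|^2)$ to the left-hand side, and by restricting to a short horizon $\rho=(2L^2+4L+1)^{-1}$ and iterating backwards in time; your plan has no counterpart of this short-horizon bootstrap. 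Second, your auxiliary function has no barrier at $c=\unc$ (nor in $t$): since $\setq$ is open at $c=\unc$, the maximum of $\Phi$ over $\overline\setq$ may sit at $c=\unc$, where neither the sub- nor the supersolution property is available; the paper's terms $\frac{\lambda}{c-\unc}$ and $\frac{\lambda}{t-T+\rho}$ exist precisely to keep the maximizer strictly inside the region where the equations can be invoked.
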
 
 \begin{proof}
By Assumptions \ref{assumption} and \ref{assumption2}, there exists   constants $L>0$ and $\kappa\in (0,1]$ such that 
\begin{align} 
|\phi(t,x,c)-\phi(\hatt,\hatx,\hatc)|&\leq L\big[(1+|x|+|\hatx|)\big(|t-\hatt|^{\kappa}+|c-\hatc|^\kappa\big)+|x-\hatx|\big]\label{estimate6} 
\end{align}
holds for all $(t,x,c)$, $(\hatt,\hatx,\hatc)\in\setq$ and $\phi=b$, $f$, $\sigma$.
From now on, we fix $L$ and $\kappa$. 

We claim that 
\begin{align}\label{assumption of uniqueness}
v(t,x,c)\leq \hatv(t,x,c),~~\forall (t,x,c)\in [(T-\rho)^{+},T]\times\rn\times(\unc,\barc],
\end{align}
where $\rho=(2L^2+4L+1)^{-1} \in(0,1)$ with $L$ being given in \eqref{estimate6}.

Without loss of generality, we can assume $\rho<T$; otherwise we just need to perform the below argument on $[0,T]$. 
Repeating the same argument on $[(T-2\rho)^{+}, T-\rho]\times\rn\times(\unc,\barc]$ so on and so forth, we can proof the claim.

Let $\gamma\in(0,1)$ be sufficiently small, and $v^\gamma$ and $\hatv_\gamma$ be the approximations of $v$ and $\hatv$, defined in \eqref{defsemiconvex} and \eqref{defsemiconcave} respectively. 
Denote $\setg=(T-\rho,T]\times\rn\times(\unc,\barc]$ and $\mu=1+\frac{\rho}{T}$. Also, let $\alpha$, $\beta$, $\epsilon$, $\delta$ in $(0,1)$ and $\lambda\in (0,\rho)$ be sufficiently small constants that will be sent to zero eventually.
Our below estimates, the implied constants $K$ may vary from line to line, but, unless otherwise stated, they will not depend on $L$, $\alpha$, $\beta$, $\epsilon$, $\delta$, $\gamma$ or $\lambda$.

For any $(t,x,c)$, $(s,y,d)\in\setg$, define two functions 
\begin{align*}
\psi(t,s,x,y,c,d)&=\alpha \frac{2T+2\rho-t-s}{2T+2\rho} (|x|^2+|y|^2)-\beta(t+s+c+d)\\
&\quad\;+\frac{1}{2\epsilon}(|t-s|^2+|c-d|^2)+\frac{1}{2\delta}|x-y|^2++\frac{\lambda}{t-T+\rho}+\frac{\lambda}{s-T+\rho}\\
&\quad\;+\frac{\lambda}{c-\unc}+\frac{\lambda}{d-\unc},
\end{align*}
and
\begin{align}\label{defofPsi}
\Psi(t,s,x,y,c,d)&=v^\gamma(t,x,c)-\hatv_\gamma(s,y,d)-\psi(t,s,x,y,c,d).
\end{align}
Dropping all the nonnegative terms in the definition of $\psi$, we obtain  
\begin{align}
\Psi(t,s,x,y,c,d)&\leq v^\gamma(t,x,c)-\hatv_\gamma(s,y,d)+\beta(t+s+c+d)\nn\\
&\leq v^\gamma(t,x,c)-\hatv_\gamma(s,y,d)+2\beta(T+\barc).\label{inequality1}
\end{align}

Using $2T+2\rho-t-s\geq 2\rho>0$, \eqref{vupgammabound} and \eqref{vgammadownbound}, it is easy to verify 
\begin{align*}
\lim_{|x|+|y|\rightarrow\infty}\Psi(t,s,x,y,c,d)&=-\infty,\\
\lim_{t\land s\searrow T-\rho}\Psi(t,s,x,y,c,d)&=-\infty,\\
\lim_{c\land d\searrow \unc}\Psi(t,s,x,y,c,d)&=-\infty.
\end{align*}
Thus, there exists a $(t_0,s_0,x_0,y_0,c_0,d_0)\in\setg\times\setg$ 
(of course depending on the parameters $\alpha$, $\beta$, $\epsilon$, $\delta$, $\gamma$, $\lambda$) such that
\begin{align*}
\Psi(t_0,s_0,x_0,y_0,c_0,d_0)&=\max_{\setg\times\setg}\Psi.
\end{align*}
In particular, 
\begin{align*}
\Psi(t_0,s_0,x_0,y_0,c_0,d_0) 
&\geq \Psi(T,T,0,0,\barc,\barc)\\
&=v^\gamma(T,0,\barc)-\hatv_\gamma(T,0,\barc)+2\beta(T+\barc)-\frac{2\lambda}{\rho}-\frac{2\lambda}{\barc-\unc}. 
\end{align*}
After rearrangement, the above yields 
\begin{align*}
&\quad\;\alpha(|x_0|^2+|y_0|^2)+\frac{1}{2\epsilon}(|t_0-s_0|^2+|c_0-d_0|^2)+\frac{1}{2\delta}|x_0-y_0|^2\\
&\quad\quad+\frac{\lambda}{t_0-T+\rho}+\frac{\lambda}{s_0-T+\rho}+\frac{\lambda}{c_0-\unc}+\frac{\lambda}{d_0-\unc}\\
&\leq v^\gamma(t_0,x_0,c_0)-v^\gamma(T,0,\barc)-\hatv_\gamma(s_0,y_0,d_0)+\hatv_\gamma(T,0,\barc)\\
&\quad\;+\beta(t_0+s_0-2T+c_0+d_0-2\barc)+\frac{2\lambda}{\rho}+\frac{2\lambda}{\barc-\unc}\\
&\leq v^\gamma(t_0,x_0,c_0)-v^\gamma(T,0,\barc)-\hatv_\gamma(s_0,y_0,d_0)+\hatv_\gamma(T,0,\barc)+2+\frac{2}{\barc-\unc},
\end{align*}
thanks to $0<\lambda<\rho< 1$.
This together with \eqref{vupgammabound} and \eqref{vgammadownbound} leads to 
\begin{align*}\label{Penaltyfunctionbound1}
&\quad\;\alpha(|x_0|^2+|y_0|^2)+\frac{1}{2\epsilon}(|t_0-s_0|^2+|c_0-d_0|^2)+\frac{1}{2\delta}|x_0-y_0|^2\\
&\quad\;+\frac{\lambda}{t_0-T+\rho}+\frac{\lambda}{s_0-T+\rho}+\frac{\lambda}{c_0-\unc}+\frac{\lambda}{d_0-\unc}\\
&\leq K (1+|x_0|+|y_0|).
\end{align*} 
As a consequence, 
\begin{align}
&\quad\;|x_0|+|y_0|+\frac{1}{2\epsilon}(|t_0-s_0|^2+|c_0-d_0|^2)+\frac{1}{2\delta}|x_0-y_0|^2\nn\\
&\quad\;+\frac{\lambda}{t_0-T+\rho}+\frac{\lambda}{s_0-T+\rho}+\frac{\lambda}{c_0-\unc}+\frac{\lambda}{d_0-\unc}\nn\\
&\leq|x_0|+|y_0|+K(1+|x_0|+|y_0|)-\alpha(|x_0|^2+|y_0|)^2\nn\\
&\leq K_\alpha:=K+\frac{(K+1)^{2}}{2\alpha}, \label{Ka bound}
\end{align}
where the last inequality is due to the elementary inequality 
$$(K+1)x-\alpha x^{2}\leq \frac{(K+1)^{2}}{4\alpha},~~\forall\;x\in\R.$$
The above implies that 
\begin{align}
\label{limits01}|x_0|,~|y_0|&\leq K_\alpha,\\
\label{limits02} t_0, ~s_0&\geq T-\rho+\frac{\lambda}{K_\alpha},\\
\label{limits03} c_0,~d_0&\geq \unc+\frac{\lambda}{K_\alpha}.
\end{align}
So
\begin{align}\label{limits1}
x_0,y_{0}\to\bar{x}_0, ~~c_0,d_{0}\to\bar{c}_0,~~t_0,s_{0}\to\bar{t}_0,
~~\mbox{as~~$\epsilon+\delta\to 0$,}
\end{align}
with some 
$$(\bart_0,\bar{x}_0,\barc_0)\in\seta:=\Big[T-\rho+\frac{\lambda}{2K_\alpha},T\Big]\times\big[-2K_\alpha,2K_\alpha\big]\times\Big[\unc+\frac{\lambda}{2K_\alpha},\barc\Big].$$ 
Without specified illustration, all the limits taken in this proof are along some convergence subsequences. Since all the sequences considered in this proof are contained in the compact $\seta$, such convergence subsequences always exist.

We first assume, there exists a subsequence of $\gamma\to 0$, along which at least one of $\bar{t}_0=T$ and $\bar{c}_0=\barc$ holds.

Fix an arbitrary point $(t,x,c)\in\setg$. Then by \eqref{inequality1}, 
\begin{align*}
&\quad\;v^\gamma(t,x,c)-\hatv_\gamma(t,x,c)-\alpha\frac{T+\rho-t}{T+\rho}|x|^2+2\beta (t+c)+\frac{2\lambda}{t-T+\rho}+\frac{2\lambda}{c-\unc}\\
&=\Psi(t,t,x,x,c,c)\\
&\leq\Psi(t_0,s_0,x_0,y_0,c_0,d_0)\\
&\leq v^\gamma(t_0,x_0,c_0)-\hatv_\gamma(s_0,y_0,d_0)+2\beta(T+\barc).
\end{align*}
Sending $\epsilon+\delta\to 0$ in above and using \eqref{limits1}, we get
\begin{align*}
&\quad\;v^\gamma(t,x,c)-\hatv_\gamma(t,x,c)-\alpha\frac{T+\rho-t}{T+\rho}|x|^2+2\beta (t+c)+\frac{2\lambda}{t-T+\rho}+\frac{2\lambda}{c-\unc}\\
&\leq v^\gamma(\bar{t}_0,\barx_0,\barc_0)-\hatv_\gamma(\bar{t}_0,\barx_0,\barc_0)+2\beta (T+\barc)\\
&\leq v(\bar{t}_0,\barx_0,\barc_0)+K\gamma^\frac{2\kappa'}{2-\kappa'}(1+K_\alpha)^\frac{2}{2-\kappa'}-\hatv(\bar{t}_0,\barx_0,\barc_0)+ 2\beta (T+\barc),
\end{align*}
where we used \eqref{vvgammadistance}, $|\barx_0|\leq K_\alpha$ and the trivial estimate $\hatv\geq \hatv_\gamma$ to get the last inequality. 
Since either $\bar{t}_0=T$ or $\bar{c}_0=\barc$ holds, we have $v(\bar{t}_0,\barx_0,\barc_0)\leq\hatv(\bar{t}_0,\barx_0,\barc_0)$ by the boundary conditions of viscosity sub- and super-solutions.  Therefore, 
\begin{align*}
&\quad\;v^\gamma(t,x,c)-\hatv_\gamma(t,x,c)-\alpha\frac{T+\rho-t}{T+\rho}|x|^2+2\beta (t+c)+\frac{2\lambda}{t-T+\rho}+\frac{2\lambda}{c-\unc}\\
&\leq K\gamma^\frac{2\kappa'}{2-\kappa'}(1+K_\alpha)^\frac{2}{2-\kappa'}+ 2\beta (T+\barc).
\end{align*} 
Sending $\beta+\lambda+\gamma\to0$ in above, it follows that 
\begin{align*}
\;v(t,x,c)-\hatv(t,x,c)-\alpha\frac{T+\rho-t}{T+\rho}|x|^2\leq 0.
\end{align*}
Finally sending $\alpha\to 0$ yields the sired result \eqref{assumption of uniqueness}.

We now consider the remainder case: both $\bar{t}_0<T$ and $\barc_0<\barc$ hold as $\gamma\to 0$. 

In this case, 
restricting on $\seta$, the function $\psi$ is smooth and has bounded derivatives, implying its semiconcavity. Consequently, $\Psi$ is semiconvex on $\seta$, which implies, for any fixed sufficiently small constant $\theta>0$, the function 
\begin{align*}
\hat{\Psi}(t,s,x,y,c,d) &\triangleq \Psi(t,s,x,y,c,d)\\
-\theta(|t-t_0|^2&+|s-s_0|^2+|x-x_0|^2+|y-y_0|^2+|c-c_0|^2+|d-d_0|^2)
\end{align*}
is also semiconvex on $\seta$. Clearly, the unique maximizer of $\hat{\Psi}$ on $\seta$ is $(t_0,s_0,x_0,y_0,c_0,d_0)$ since $\Psi$ is maximized at $(t_0,s_0,x_0,y_0,c_0,d_0)$. 
By Lemmas \ref{Alexandrov} and \ref{Jensen}, there exist $q,\hatq, p,\hatp, r$ and $\hatr$ with 
\begin{align}\label{pqrbound}
|q|+|\hatq|+|p|+|\hatp|+|r|+|\hatr|\leq \theta,
\end{align}
and $(\hatt_0,\hats_0,\hatx_0,\haty_0,\hatc_0,\hatd_0)$ in the interior of $\seta$ with
\begin{align}\label{circlebound}
|\hatt_0-t_0|+|\hats_0-s_0|+|\hatx_0-x_0|+|\haty_0-y_0|+|\hatc_0-c_0|+|\hatd_0-d_0|\leq \theta,
\end{align}
such that the function 
\begin{align*}
\hat{\Psi}&(t,s,x,y,c,d)+qt+\hatq s+px+\hatp y+rc+\hatr d 
\end{align*}
attains its maximum at $(\hatt_0,\hats_0,\hatx_0,\haty_0,\hatc_0,\hatd_0)$ on $\seta$, at which point $v^\gamma(t,x,c)-v_\gamma(s,y,d)$ is twice differentiable. 

For notational simplicity, we now drop $\gamma$ in $v^\gamma(t,x,c)$ and $\hatv_\gamma(s,y,d)$. Then, by the first- and second-order neccessary conditions for a maximum point, at the point $(\hatt_0,\hats_0,\hatx_0,\haty_0,\hatc_0,\hatd_0)$, we must have
\begin{align}
&v_t=\psi_t+2\theta(\hatt_0-t_0)-q,\label{vt}\\
&\hatv_s=-\psi_s-2\theta(\hat{s_0}-s_0)+\hatq,\nn\\
&v_x=\psi_x+2\theta(\hatx_0-x_0)-p,\nn\\
&\hatv_y=-\psi_y-2\theta(\haty_0-y_0)+\hatp,\nn\\
&v_c=\psi_c+2\theta(\hatc_0-c_0)-r,\nn\\
&\hatv_d=-\psi_d-2\theta(\hatd_0-d_0)+\hatr,\nn\\
&\left(
\begin{array}{cc}
v_{xx} & 0 \\
0 & -\hatv_{yy}
\end{array}
\right)
\leq
\left(
\begin{array}{cc}
\psi_{xx} & \psi_{xy} \\
\psi_{xy} & \psi_{yy}
\end{array}
\right)
+2\theta \mathbf{I}_{2N}.\nn
\end{align}
where $\mathbf{I}_{2N}$ is the $2N\times2N$ identity matrix. Now, at $(\hatt_0,\hats_0,\hatx_0,\haty_0,\hatc_0,\hatd_0)$, we have the following:
\begin{align*}
\psi_t &=-\beta-\frac{\lambda}{(\hatt_0-T+\rho)^2}-\frac{\alpha}{2T+2\rho}(|\hatx_0|^2+|\haty_0|^2)+\frac{1}{\epsilon}(\hatt_0-\hats_0),\nn\\
\psi_s &=-\beta-\frac{\lambda}{(\hats_0-T+\rho)^2}-\frac{\alpha}{2T+2\rho}(|\hatx_0|^2+|\haty_0|^2)+\frac{1}{\epsilon}(\hats_0-\hatt_0),\nn\\
\psi_x&=\alpha\frac{2T+2\rho-\hatt_0-\hats_0}{T+\rho}\hatx_0+\frac{\hatx_0-\haty_0}{\delta},\nn\\
\psi_y&=\alpha\frac{2T+2\rho-\hatt_0-\hats_0}{T+\rho}\haty_0+\frac{\haty_0-\hatx_0}{\delta},\nn\\
\psi_c &=\frac{\hatc_0-\hatd_0}{\epsilon}-\beta-\frac{\lambda}{(\hatc_0-\unc)^2},\nn\\
\psi_d &=\frac{\hatd_0-\hatc_0}{\epsilon}-\beta-\frac{\lambda}{(\hatd_0-\unc)^2},
\end{align*}
and 
\begin{align}
\begin{pmatrix}
\psi_{xx} & \psi_{xy} \\
\psi_{xy} & \psi_{yy}
\end{pmatrix}
=\frac{1}{\delta}
\begin{pmatrix}
I_N & -I_N \\
-I_N & I_N
\end{pmatrix}
+\alpha\frac{2T+2\rho-\hatt_0-\hats_0}{T+\rho}\mathbf{I}_{2N}.\label{defofA}
\end{align}

Combining equations from \eqref{vt} to \eqref{defofA}, we know
\begin{align}
&v_t=-\beta-\frac{\lambda}{(\hatt_0-T+\rho)^2}-\frac{\alpha}{2T+2\rho}(|\hatx_0|^2+|\haty_0|^2)+\frac{1}{\epsilon}(\hatt_0-\hats_0)+2\theta(\hatt_0-t_0)-q,\label{vtbound}\\
&\hatv_s=\beta+\frac{\lambda}{(\hats_0-T+\rho)^2}+\frac{\alpha}{2T+2\rho}(|\hatx_0|^2+|\haty_0|^2)-\frac{1}{\epsilon}(\hats_0-\hatt_0)-2\theta(\hat{s_0}-s_0)+\hatq,\label{hatvsbound}\\
&v_x=\alpha\frac{2T+2\rho-\hatt_0-\hats_0}{T+\rho}\hatx_0+\frac{\hatx_0-\haty_0}{\delta}+2\theta(\hatx_0-x_0)-p,\label{vxbound}\\
&\hatv_y=-\alpha\frac{2T+2\rho-\hatt_0-\hats_0}{T+\rho}\haty_0-\frac{\haty_0-\hatx_0}{\delta}-2\theta(\haty_0-y_0)+\hatp,\label{hatvybound}\\
&v_c=\frac{\hatc_0-\hatd_0}{\epsilon}-\beta-\frac{\lambda}{(\hatc_0-\unc)^2}+2\theta(\hatc_0-c_0)-r,\label{vcbound}\\
&\hatv_d=-\frac{\hatd_0-\hatc_0}{\epsilon}+\beta+\frac{\lambda}{(\hatd_0-\unc)^2}-2\theta(\hatd_0-d_0)+\hatr\label{hatvdbound}\\
&A\equiv\left(
\begin{array}{cc}
v_{xx} & 0 \\
0 & -\hatv_{yy}
\end{array}
\right)
\leq
\frac{1}{\delta}
\left(
\begin{array}{cc}
I_N & -I_N \\
-I_N & I_N
\end{array}
\right)+\alpha\frac{2T+2\rho-\hatt_0-\hats_0}{T+\rho}\mathbf{I}_{2N}
+2\theta \mathbf{I}_{2N}.\label{vxxvyy}
\end{align} 

We first assume 
\begin{align}\label{case1}
-\hatv_d(\hats_0,\haty_0,\hatd_0)\geq 0.
\end{align}
By Lemma \ref{gammasupersubviscosity} and the definition of viscosity subsolution, we have 
\begin{align*}
-v_c(\hatt_0,\hatx_0,\hatc_0)\leq0. 
\end{align*}
Together with 
\eqref{vcbound} and \eqref{hatvdbound}, we get
\begin{align*}
&\quad\;v_c(\hatt_0,\hatx_0,\hatc_0)-\hatv_d(\hats_0,\haty_0,\hatd_0)\\
&=-2\beta-\frac{\lambda}{(\hatc_0-\unc)^2}-\frac{\lambda}{(\hatd_0-\unc)^2}+2\theta(\hatc_0-c_0)-r+2\theta(\hatd_0-d_0)+\hatr\geq 0,
\end{align*}
so
\begin{align*}
-2\beta+2\theta(\hatc_0-c_0)-r+2\theta(\hatd_0-d_0)+\hatr\geq \frac{\lambda}{(\hatc_0-\unc)^2}+\frac{\lambda}{(\hatd_0-\unc)^2}>0.
\end{align*}
Sending $\theta\to0$ in above and using \eqref{pqrbound} and \eqref{circlebound}, we get $-2\beta\geq 0$, contradicting to $\beta>0$. 

The above contradiction shows that \eqref{case1} cannot hold. Therefore, by the definition of viscosity supersolution, we must have 
\begin{align}\label{vsviscosity}
&-\hatv_s+\mathcal{L}_\gamma (\hats_0,\haty_0,\hatd_0)\geq 0.
\end{align}
Here and hereafter, for notation simplicity, we write $-\hatv_s+\mathcal{L}_\gamma (\hats_0,\haty_0,\hatd_0)$ instead of 
\[-\hatv_s(s,y,d)+\mathcal{L}_\gamma (s,y,d,-\hatv_x(s,y,d),-\hatv_{xx}(s,y,d))\big|_{\hats_0,\haty_0,\hatd_0}.\]
and similar for $\mathcal{L}^{\gamma}$.

Recalling the definition of $\setc(t,x,c)$ in \eqref{defupgamma} and \eqref{defdowngamma}, for any $(t,x,c)\in\setc(\hatt_0,\hatx_0,\hatc_0)$, it should satisfy the following
\begin{align}\label{circlebound with alpha}
|t-\hatt_0|^2+|x-\hatx_0|^2+|c-\hatc_0|^2
&\leq 2\gamma^2 K\gamma^\frac{2\kappa'}{2-\kappa'}(1+K_\alpha)^\frac{2}{2-\kappa'}\nn\\
&= 2K\gamma^\frac{4}{2-\kappa'}(1+K_\alpha)^\frac{2}{2-\kappa'}.
\end{align} 
The above inequality means that $(t,x,c)\in\setw$ where 
\begin{align}\label{defofsetw}
\setw\triangleq[\hatt_0-K_{\alpha\gamma},\hatt_0+K_{\alpha\gamma}]\times[\hatx_0-K_{\alpha\gamma},\hatx_0+K_{\alpha\gamma}]\times[\hatc_0-K_{\alpha\gamma},\hatc_0+K_{\alpha\gamma}],
\end{align}
and 
\begin{align}\label{defofkalphagamma}
K_{\alpha\gamma}\triangleq \sqrt{2K}\gamma^\frac{2}{2-\kappa'} (1+K_\alpha)^{\frac{1}{2-\kappa'}}.
\end{align}
So $\setc(\hatt_0,\hatx_0,\hatc_0)$ can be rewritten as follow:
\begin{align*}
\setc(\hatt_0,\hatx_0,\hatc_0)=\Big\{(t,x,c)\in\setw\Big|~\frac{1}{2\gamma^2}[|t-\hatt_0|^2+|x-\hatx_0|^2+|c-\hatc_0|^2]\leq K\gamma^\frac{2\kappa'}{2-\kappa'}(1+K_\alpha)^\frac{2}{2-\kappa'} \Big\}.
\end{align*}
From Assumptions \ref{assumption} and \ref{assumption2}, we know that $b$, $f$ and $\sigma$ are continuous. So we get the continuity of the below function in $(\wids,\widy,\widd)$: 
\begin{align*}
&-\frac{1}{2}\hatv_{yy}(\hats_0,\haty_0,\hatd_0)\sigma^2(\wids,\widy,\widd)-\hatv_y(\hats_0,\haty_0,\hatd_0) b(\wids,\widy,\widd)-f(\wids,\widy,\widd). 
\end{align*}
Since $\setw$ is a closed set, by combining the above, we conclude there exists a $(s^*,y^*,d^*)\in\setc(\hats_0,\haty_0,\hatd_0)$ such that 
\begin{align}
\quad\mathcal{L}_\gamma(\hats_0,\haty_0,\hatd_0)
&=\inf_{(\wids,\widy,\widd)\in\setc(\hats_0,\haty_0,\hatd_0)}\mathcal{G}(\wids,\widy,\widd,-\hatv_x(\hats_0,\haty_0,\hatd_0),-\hatv_{xx}(\hats_0,\haty_0,\hatd_0))\nn\\
&=\mathcal{G}(s^*,y^*,d^*,-\hatv_x(\hats_0,\haty_0,\hatd_0),-\hatv_{xx}(\hats_0,\haty_0,\hatd_0))\nn\\ 
&=-\frac{1}{2}\hatv_{yy}(\hats_0,\haty_0,\hatd_0)\sigma^2(s^*,y^*,d^*)-\hatv_y(\hats_0,\haty_0,\hatd_0) b(s^*,y^*,d^*)\nn\\
&\quad\;-f(s^*,y^*,d^*).\label{ldowngamma} 
\end{align}
Similarly, we know that there exists a $(t^*,x^*,c^*)\in\setc(\hatt_0,\hatx_0,\hatc_0)$, such that 
\begin{align}
\mathcal{L}^\gamma(\hatt_0,\hatx_0,\hatc_0)=&-\frac{1}{2}v_{xx}(\hatt_0,\hatx_0,\hatc_0)\sigma^2(t^*,x^*,c^*)-v_x(\hatt_0,\hatx_0,\hatc_0) b(t^*,x^*,c^*)\nn\\
&-f(t^*,x^*,c^*). \label{lupgamma}
\end{align}
On the other hand, by Lemma \ref{gammasupersubviscosity} and the definition of viscosity subsolution, it holds that 
\begin{align}\label{vtviscosity}
-v_t+\mathcal{L}^\gamma (\hatt_0,\hatx_0,\hatc_0)\leq0.
\end{align}
Together with \eqref{vsviscosity}, we get
\begin{align*}
\hatv_s(\hats_0,\haty_0,\hatd_0)-v_t(\hatt_0,\hatx_0,\hatc_0)
&\leq\mathcal{L}_\gamma (\hats_0,\haty_0,\hatd_0)-\mathcal{L}^\gamma(\hatt_0,\hatx_0,\hatc_0).
\end{align*}
Using \eqref{ldowngamma} and \eqref{lupgamma}, the above leads to
\begin{align}
&\quad\; \hatv_s(\hats_0,\haty_0,\hatd_0)-v_t(\hatt_0,\hatx_0,\hatc_0)\nn\\
&\leq\Big\{-\frac{1}{2}\hatv_{yy}(\hats_0,\haty_0,\hatd_0)\sigma^2(s^*,y^*,d^*)-\hatv_y(\hats_0,\haty_0,\hatd_0)b(s^*,y^*,d^*)-f(s^*,y^*,d^*)\Big\}\nn\\
&\quad\;-\Big\{-\frac{1}{2}v_{xx}(\hatt_0,\hatx_0,\hatc_0)\sigma^2(t^*,x^*,c^*) -v_x(\hatt_0,\hatx_0,\hatc_0)b(t^*,x^*,c^*) -f(t^*,x^*,c^*)\Big\}\nn\\
& = (I)+(II)+(III),\label{equation5}
\end{align}
where 
\begin{align*}
(I)&\triangleq \frac{1}{2}v_{xx}(\hatt_0,\hatx_0,\hatc_0)\sigma^2(t^*,x^*,c^*)-\frac{1}{2}\hatv_{yy}(\hats_0,\haty_0,\hatd_0)\sigma^2(s^*,y^*,d^*),\\
(II)&\triangleq v_x(\hatt_0,\hatx_0,\hatc_0)b(t^*,x^*,c^*)-\hatv_y(\hats_0,\haty_0,\hatd_0)b(s^*,y^*,d^*),\\
(III)&\triangleq f(t^*,x^*, c^*)-f(s^*,y^*,d^*).
\end{align*} 
We first estimate the left-hand side of \eqref{equation5}.
By \eqref{vtbound} and \eqref{hatvsbound}, we have
\begin{align}
&\quad\; \hatv_s(\hats_0,\haty_0,\hatd_0)-v_t(\hatt_0,\hatx_0,\hatc_0)\nn\\
&=2\beta+\frac{\alpha}{T+\rho}(|\hatx_0|^2+|\haty_0|^2)+\frac{\lambda}{(\hatt_0-T+\rho)^2}\nn\\
&\quad\;+\frac{\lambda}{(\hats_0-T+\rho)^2}-2\theta(\hatt_0-t_0+\hats_0-s_0)+q+\hatq\nn\\
& \geq 2\beta+\frac{\alpha}{T+\rho}(|\hatx_0|^2+|\haty_0|^2)-2\theta(\hatt_0-t_0+\hats_0-s_0)+q+\hatq.\label{estimateofLHS}
\end{align} 
We now estimate the right-hand side of \eqref{equation5}. 
The below estimates hold for $\phi=b$, $f$, $\sigma$. 
First, by \eqref{estimate6}, we have 
\begin{align*}
&\quad\;|\phi(t^*,x^*, c^*)-\phi(s^*,y^*,d^*)|\nn\\
&\leq L\big[(1+|x^*|+|y^*|)|t^*-s^*|^{\kappa}+|x^*-y^*|+(1+|x^*|+|y^*|)| c^*-d^*|^\kappa\big].
\end{align*}
Furthermore, by the definition of $\setw$ in \eqref{defofsetw}, we get
\begin{align*}
|t^*-s^*|\leq|t^*-\hatt_0|+|\hatt_0-\hats_0|+|\hats_0-s^*|\leq 2K_{\alpha\gamma}+|\hatt_0-\hats_0|,
\end{align*}
and similarly,
\begin{align*}
|x^*-y^*|\leq2K_{\alpha\gamma}+|\hatx_0-\haty_0|,~~
|c^*-d^*|\leq2K_{\alpha\gamma}+|\hatc_0-\hatd_0|.
\end{align*} 
By the definition of $\seta$, we have $|\hatx_0|$, $|\haty_0|\leq 2K_\alpha$. 
Combining the above yields
\begin{align}\label{estimateofphi1}
&\quad\;|\phi(t^*,x^*,c^*)-\phi(s^*,y^*,d^*)|\nn\\
&\leq L\big[(1+|\hatx_0|+|\haty_0|+2K_{\alpha\gamma})(2K_{\alpha\gamma}+|\hatt_0-\hats_0|)^{\kappa}+2K_{\alpha\gamma}+|\hatx_0-\haty_0|\nn\\
&\quad\;+(1+|\hatx_0|+|\haty_0|+2K_{\alpha\gamma})(2K_{\alpha\gamma}+|\hatc_0-\hatd_0|)^{\kappa}\big]\nn\\
&\leq L\big[(1+4K_\alpha+2K_{\alpha\gamma})(2K_{\alpha\gamma}+|\hatt_0-\hats_0|)^{\kappa}+2K_{\alpha\gamma}+|\hatx_0-\haty_0|\nn\\
&\quad\;+(1+4K_\alpha+2K_{\alpha\gamma})(2K_{\alpha\gamma}+|\hatc_0-\hatd_0|)^{\kappa}\big].
\end{align}
Moreover, by Assumption \ref{assumption} and \eqref{defofsetw}, we have 
\begin{align*}
|\phi(t^*,x^*, c^*)| \leq L(1+|x^*|)\leq L(1+|\hatx_0|+|x^*-\hatx_0|) \leq L(1+|\hatx_0|+K_{\alpha\gamma}).
\end{align*}
Similarly, we get
\begin{align}\label{estimateofphi2}
|\phi(s^*,y^*,u^*)|\leq L(1+|\haty_0|+K_{\alpha\gamma}). 
\end{align}
Next, we deal with $(I)$, $(II)$ and $(III)$. 
By \eqref{vxxvyy},
\begin{align}
(I)&=\frac{1}{2}v_{xx}(\hatt_0,\hatx_0,\hatc_0)\sigma^2(t^*,x^*,c^*)-\frac{1}{2}\hatv_{yy}(\hats_0,\haty_0,\hatd_0)\sigma^2(s^*,y^*,d^*)\nn\\
&\leq\frac{1}{2}\trace{\left(
\begin{array}{cc}
\sigma(t^*,x^*,c^*) \\
\sigma(s^*,y^*,d^*)
\end{array}
\right)^\top A \left(
\begin{array}{cc}
\sigma(t^*,x^*,c^*) \\
\sigma(s^*,y^*,d^*)
\end{array}
\right)}\nn\\
&\leq\frac{1}{2}\Big\{\frac{1}{\delta}|\sigma(t^*,x^*,c^*)-\sigma(s^*,y^*,d^*)|^2\nn\\
&\quad\;+\alpha\frac{2T+2\rho-\hatt_0-\hats_0}{T+\rho}\big(|\sigma(t^*,x^*, c^*)|^2+|\sigma(s^*,y^*,d^*)|^2\big)\Big\}.
\end{align}
By \eqref{estimateofphi1} and \eqref{estimateofphi2}, 
\begin{align}
(I)&\leq \frac{L^2}{2\delta}\big[(1+4K_\alpha+2K_{\alpha\gamma})(2K_{\alpha\gamma}+|\hatt_0-\hats_0|)^{\kappa}+2K_{\alpha\gamma}+|\hatx_0-\haty_0|\nn\\
&\quad\;+(1+4K_\alpha+2K_{\alpha\gamma})(2K_{\alpha\gamma}+|\hatc_0-\hatd_0|)^{\kappa}\big]^2\nn\\
&\quad\;+\alpha\frac{2T+2\rho-\hatt_0-\hats_0}{2T+2\rho}\big[ L^2(1+|\hatx_0|+K_{\alpha\gamma})^2+ L^2(1+|\haty_0|+K_{\alpha\gamma})^2\big].\label{estimateof1} 
\end{align}
By \eqref{vxbound} and \eqref{hatvybound}, we have 
\begin{align*}
(II)&=\Big[\alpha\frac{2T+2\rho-\hatt_0-\hats_0}{T+\rho}\hatx_0+\frac{\hatx_0-\haty_0}{\delta}+2\theta(\hatx_0-x_0)-p\Big]b(t^*,x^*,c^*)\nn\\
&\quad\;+\Big[\alpha\frac{2T+2\rho-\hatt_0-\hats_0}{T+\rho}\haty_0+\frac{\haty_0-\hatx_0}{\delta}+2\theta(\haty_0-y_0)-p\Big]b(s^*,y^*,d^*)\nn\\
&=\Big[\alpha\frac{2T+2\rho-\hatt_0-\hats_0}{T+\rho}\hatx_0+2\theta(\hatx_0-x_0)-p\Big]b(t^*,x^*,c^*)\nn\\
&\quad\;+\Big[\alpha\frac{2T+2\rho-\hatt_0-\hats_0}{T+\rho}\haty_0+2\theta(\haty_0-y_0)-p\Big]b(s^*,y^*,d^*)\nn\\
&\quad\;+\frac{\hatx_0-\haty_0}{\delta}(b(t^*,x^*,c^*)-b(s^*,y^*,d^*)).\nn
\end{align*}
By \eqref{pqrbound}, \eqref{circlebound} and above, 
\begin{align*}
(II)&\leq\Big(\alpha\frac{2T+2\rho-\hatt_0-\hats_0}{T+\rho}|\hatx_0|+2\theta^2+\theta\Big)|b(t^*,x^*,c^*)|\nn\\
&\quad\;+\Big(\alpha\frac{2T+2\rho-\hatt_0-\hats_0}{T+\rho}|\haty_0|+2\theta^2+\theta\Big)|b(s^*,y^*,d^*)|\nn\\
&\quad\;+\frac{|\hatx_0-\haty_0|}{\delta}|b(t^*,x^*,c^*)-b(s^*,y^*,d^*)|.\nn 
\end{align*}
By \eqref{estimateofphi1} and \eqref{estimateofphi2}, the above estimate leads to 
\begin{align}
(II)&\leq\Big(\alpha\frac{2T+2\rho-\hatt_0-\hats_0}{T+\rho}|\hatx_0|+2\theta^2+\theta\Big)L(1+|\hatx_0|+K_{\alpha\gamma})\nn\\
&\quad\;+\Big(\alpha\frac{2T+2\rho-\hatt_0-\hats_0}{T+\rho}|\haty_0|+2\theta^2+\theta\Big)L(1+|\haty_0|+K_{\alpha\gamma})\nn\\
&\quad\;+\frac{|\hatx_0-\haty_0|}{\delta} L\big[(1+4K_\alpha+2K_{\alpha\gamma})(2K_{\alpha\gamma}+|\hatt_0-\hats_0|)^{\kappa}\nn\\
&\quad\;+2K_{\alpha\gamma}+|\hatx_0-\haty_0|+(1+4K_\alpha+2K_{\alpha\gamma})(2K_{\alpha\gamma}+|\hatc_0-\hatd_0|)^{\kappa}\big].\label{estimateof2}
\end{align}
By \eqref{estimateofphi1}, we have
\begin{align}\label{estimateof3}
(III)&=f(t^*,x^*,c^*)-f(s^*,y^*,d^*)\nn\\
&\leq L\big[(1+4K_\alpha+2K_{\alpha\gamma})(2K_{\alpha\gamma}+|\hatt_0-\hats_0|)^{\kappa}+2K_{\alpha\gamma}+|\hatx_0-\haty_0|\nn\\
&\quad\;+(1+4K_\alpha+2K_{\alpha\gamma})(2K_{\alpha\gamma}+|\hatc_0-\hatd_0|)^{\kappa}\big].
\end{align}
Combine \eqref{estimateofLHS}, \eqref{estimateof3}, \eqref{estimateof2} and \eqref{estimateof1}, \eqref{equation5} yields 
\begin{align}
&\quad\;2\beta+\frac{\alpha}{T+\rho}(|\hatx_0|^2+|\haty_0|^2)-2\theta(\hatt_0-t_0+\hats_0-s_0)+q+\hatq\nn\\
&\leq L\big[(1+4K_\alpha+2K_{\alpha\gamma})(2K_{\alpha\gamma}+|\hatt_0-\hats_0|)^{\kappa}+2K_{\alpha\gamma}+|\hatx_0-\haty_0|\nn\\
&\quad\;+(1+4K_\alpha+2K_{\alpha\gamma})(2K_{\alpha\gamma}+|\hatc_0-\hatd_0|)^{\kappa}\big]\nn\\
&\quad\;+\Big(\alpha\frac{2T+2\rho-\hatt_0-\hats_0}{T+\rho}|\hatx_0|+2\theta^2+\theta\Big)L(1+|\hatx_0|+K_{\alpha\gamma})\nn\\
&\quad\;+\Big(\alpha\frac{2T+2\rho-\hatt_0-\hats_0}{T+\rho}|\haty_0|+2\theta^2+\theta\Big)L(1+|\haty_0|+K_{\alpha\gamma})\nn\\
&\quad\;+\frac{|\hatx_0-\haty_0|}{\delta} L\big[(1+4K_\alpha+2K_{\alpha\gamma})(2K_{\alpha\gamma}+|\hatt_0-\hats_0|)^{\kappa}\nn\\
&\quad\;+2K_{\alpha\gamma}+|\hatx_0-\haty_0|+(1+4K_\alpha+2K_{\alpha\gamma})(2K_{\alpha\gamma}+|\hatc_0-\hatd_0|)^{\kappa}\big]\nn\\
&\quad\;+\frac{L^2}{2\delta}\big[(1+4K_\alpha+2K_{\alpha\gamma})(2K_{\alpha\gamma}+|\hatt_0-\hats_0|)^{\kappa}+2K_{\alpha\gamma}+|\hatx_0-\haty_0|\nn\\
&\quad\;+(1+4K_\alpha+2K_{\alpha\gamma})(2K_{\alpha\gamma}+|\hatc_0-\hatd_0|)^{\kappa}\big]^2\nn\\
&\quad\;+\alpha\frac{2T+2\rho-\hatt_0-\hats_0}{2T+2\rho}\big[ L^2(1+|\hatx_0|+K_{\alpha\gamma})^2+ L^2(1+|\haty_0|+K_{\alpha\gamma})^2\big].\label{estimateof1+2+3}
\end{align}
By \eqref{pqrbound} and \eqref{circlebound}, we have
\begin{align*}
\lim_{\theta\to0}(\hatt_0,\hats_0,\hatx_0,\haty_0,\hatc_0,\hatd_0)&=(t_0,s_0,x_0,y_0,c_0,d_0),\\
\lim_{\theta\to0}(p,q,r)&=(0,0,0).
\end{align*}
So sending $\theta\to0$ in \eqref{estimateof1+2+3} leads to
\begin{align}
&\quad\; 2\beta+\frac{\alpha}{T+\rho}(|x_0|^2+|y_0|^2)\nn\\
&\leq L\big[(1+4K_\alpha+2K_{\alpha\gamma})(2K_{\alpha\gamma}+|t_0-s_0|)^{\kappa}+2K_{\alpha\gamma}\nn\\
&\quad\;+|x_0-y_0|+(1+4K_\alpha+2K_{\alpha\gamma})(2K_{\alpha\gamma}+|c_0-d_0|)^\kappa\big]\nn\\
&\quad\;+\Big(\alpha\frac{2T+2\rho-t_0-s_0}{T+\rho}|x_0|\Big)L(1+|x_0|+K_{\alpha\gamma})\nn\\
&\quad\;+\Big(\alpha\frac{2T+2\rho-t_0-s_0}{T+\rho}|y_0|\Big)L(1+|y_0|+K_{\alpha\gamma})\nn\\
&\quad\;+\frac{|x_0-y_0|}{\delta}L\big[(1+4K_\alpha+2K_{\alpha\gamma})(2K_{\alpha\gamma}+|t_0-s_0|)^{\kappa}+2K_{\alpha\gamma}+|x_0-y_0|\nn\\
&\quad\;+(1+4K_\alpha+2K_{\alpha\gamma})(2K_{\alpha\gamma}+|c_0-d_0|)^\kappa\big]\nn\\
&\quad\;+\frac{1}{2\delta}\Big\{L\big[(1+4K_\alpha+2K_{\alpha\gamma})(2K_{\alpha\gamma}+|t_0-s_0|)^{\kappa}+2K_{\alpha\gamma}+|x_0-y_0|\nn\\
&\quad\;+(1+4K_\alpha+2K_{\alpha\gamma})(2K_{\alpha\gamma}+|c_0-d_0|)^\kappa\big]^2\nn\\
&\quad\;+\alpha\frac{2T+2\rho-t_0-s_0}{2T+2\rho}[L^2( 1+|x_0|+K_{\alpha\gamma})^2+L^2( 1+|y_0|+K_{\alpha\gamma})^2]\Big\}.\label{estimateof1+2+3(2)}
\end{align}
Suppose that 
\begin{align*}
\lim_{\epsilon+\gamma\to0}(t_0,s_0,x_0,y_0,c_0,d_0)=(t_\delta,s_\delta,x_\delta,y_\delta,c_\delta,d_\delta).
\end{align*}
Now we deal with \eqref{Ka bound} again. Sending $\epsilon+\gamma\to0$ in \eqref{Ka bound}, it leads to
\begin{align}\label{ka bound2}
&\quad\;|x_\delta|+|y_\delta|+\lim_{\epsilon+\gamma\to0}\Big[\frac{1}{2\epsilon}(|t_0-s_0|^2+|c_0-d_0|^2)\Big]+\frac{1}{2\delta}|x_\delta-y_\delta|^2\nn\\
&\quad\;+\frac{\lambda}{t_\delta-T+\rho}+\frac{\lambda}{s_\delta-T+\rho}+\frac{\lambda}{c_\delta-\unc}+\frac{\lambda}{d_\delta-\unc}\leq K_\alpha. 
\end{align}
The above implies $t_\delta=s_\delta$, $c_\delta=d_\delta$.
Note that by \eqref{defofkalphagamma}, $\lim_{\gamma\to0}K_{\alpha\gamma}=0$. It hence follows \begin{align*}
\lim_{\ep+\gamma\to0}(2K_{\alpha\gamma}+|t_0-s_0|^\kappa)=\lim_{\ep+\gamma\to0}(2K_{\alpha\gamma}+|c_0-d_0|^\kappa)=0. 
\end{align*}
Sending $\epsilon+\gamma\to0$ in \eqref{estimateof1+2+3(2)}, we get 
\begin{align}
&\quad\;2\beta+\frac{\alpha}{T+\rho}(|x_\delta|^2+|y_\delta|^2)\nn\\
&\leq L(|x_\delta-y_\delta|)+\alpha\frac{2T+2\rho-2t_\delta}{T+\rho}L\big[x_\delta (1+|x_\delta|)+y_\delta(1+|y_\delta|)\big]\nn\\
&\quad\;+\frac{L|y_\delta-x_\delta|^2}{\delta}+\frac{L|x_\delta-y_\delta|^2}{2\delta} +\alpha\frac{2T+2\rho-2t_\delta}{2T+2\rho}L^2\big[(1+|x_\delta|)^2+(1+|y_\delta|)^2\big].\label{estimateof1+2+3(3)} 
\end{align}
Suppose that
\begin{align*}
\lim_{\delta\to0}(t_\delta,t_\delta,x_\delta,y_\delta,c_\delta,c_\delta)=(\bar{t}_1,\bar{t}_1,\barx_1,\bary_1,\barc_1,\barc_1).
\end{align*} 
Sending $\delta\to0$ in \eqref{ka bound2}, we get $\barx_1=\bary_1$.

Moreover, we deal with \eqref{defofPsi} again. From the inequality
\begin{align}\label{Kalpha bound}
2\Psi(t_0,s_0,x_0,y_0,c_0,d_0)\geq \Psi(t_0,t_0,x_0,x_0,c_0,c_0)+\Psi(s_0,s_0,y_0,y_0,d_0,d_0),
\end{align}
it follows that
\begin{align*}
&\quad\; 2v^\gamma(t_0,x_0,c_0)-2\hatv_\gamma(s_0,y_0,d_0)-2\alpha\Big(1-\frac{t_0+s_0}{2T+2\rho}\Big)(|x_0|^2+|y_0|^2)\\
&\quad\;+2\beta(t_0+s_0+c_0+d_0)-\frac{1}{\epsilon}(|t_0-s_0|^2+|c_0-d_0|^2)-\frac{1}{\delta}|x_0-y_0|^2-\frac{2\lambda}{t_0-T+\rho}\\
&\quad\;-\frac{2\lambda}{s_0-T+\rho}+\frac{2\lambda}{c_0-\unc}+\frac{2\lambda}{d_0-\unc}\\
&\geq v^\gamma(t_0,x_0,c_0)-\hatv_\gamma(t_0,x_0,c_0)-2\alpha\Big(1-\frac{2t_0}{2T+2\rho}\Big)|x_0|^2+2\beta (t_0+c_0)\\
&\quad\;-\frac{2\lambda}{t_0-T+\rho}-\frac{2\lambda}{c_0-\unc}+ v^\gamma(s_0,y_0,d_0)-\hatv_\gamma(s_0,y_0,d_0)\\
&\quad\;-2\alpha\Big(1-\frac{2s_0}{2T+2\rho}\Big)|y_0|^2+2\beta(s_0+d_0)-\frac{2\lambda}{s_0-T+\rho}-\frac{2\lambda}{d_0-\unc}.
\end{align*}
After cancellation and rearrangement, the above becomes
\begin{align*}
&\quad\; \frac{1}{\epsilon}(|t_0-s_0|^2+|c_0-d_0|^2)+\frac{1}{\delta}|x_0-y_0|^2\nn\\
&\leq v^\gamma(t_0,x_0,c_0)-v^\gamma(s_0,y_0,d_0)+\hatv_\gamma(t_0,x_0,c_0)-\hatv_\gamma(s_0,y_0,d_0) \\
&\quad\;+ \frac{\alpha}{T+\rho}(s_0-t_0)(|x_0|^2-|y_0|^2).
\end{align*}
By Lemma \ref{lemma1.1} and Lemma \ref{lemma2}, noting that $(t_0,x_0,c_0)$, $(s_0, y_0,d_0)\in\seta$, we have 
\begin{align}
&\quad\; \frac{1}{\epsilon}(|t_0-s_0|^2+|c_0-d_0|^2)+\frac{1}{\delta}|x_0-y_0|^2\nn\\
&\leq 2K(1+|x_0|)^\frac{4}{3}\gamma^\frac{2}{3}+2K(1+|y_0|)^\frac{4}{3}\gamma^\frac{2}{3}\nn\\
&\quad\;+2K\big[|x_0-y_0|+(1+|x_0|\vee|y_0|)|t_0-s_0|^{\frac{1}{2}}+|c_0-d_0|\big]\nn\\
&\quad\;+\frac{\alpha}{T+\rho}(s_0-t_0)(|x_0|^2-|y_0|^2).\label{estimate4}
\end{align} 
Sending $\epsilon+\delta+\gamma\to0$ in \eqref{estimate4}, we conclude that
\begin{align*}
\lim_{\epsilon+\delta+\gamma\to0}\frac{1}{\epsilon}(|t_0-s_0|^2+|c_0-d_0|^2)+\frac{1}{\delta}|x_0-y_0|^2=0.
\end{align*} 
Sending $\delta\to0$ in \eqref{estimateof1+2+3(3)} leads to
\begin{align*}
2\beta+\frac{\alpha}{T+\rho}2|\barx_1|^2\leq \alpha\frac{2T+2\rho-2\bar{t}_1}{T+\rho}2L(|\barx_1|+|\barx_1|^2)+\alpha\frac{2T+2\rho-2\bar{t}_1}{T+\rho}L^2(1+|\barx_1|)^2,
\end{align*} 
namely, 
\begin{align*} 
\beta (T+\rho)&\leq \alpha\big\{\big[-1+(T+\rho-\bar{t}_1)(2L+L^2)\big]|\barx_1|^2\nn\\
&\qquad\quad+(T+\rho-\bar{t}_1)(2L+2L^2)|\barx_1|+(T+\rho-\bar{t}_1)L^2\big\}.
\end{align*}
Recall that $\bar{t}_1\in [T-\rho,T]$ and $\rho=\frac{1}{4(2L+L^2+1)}$, so 
\begin{align*} 
T+\rho-\bar{t}_1\leq T+\rho-(T-\rho)=2\rho\leq \frac{1}{2(2L+L^2)}.
\end{align*}
Combining the above two estimates, we get 
\begin{align*} 
\beta (T+\rho)&\leq \alpha\Big\{-\frac{1}{2}|\barx_1|^2+\frac{1}{2}|\barx_1| +\frac{1}{2}\Big\}=\frac{\alpha}{8}\big\{5- (2|\barx_1|-1)^2\big\}\leq \frac{5\alpha}{8}.
\end{align*}
Sending $\alpha\rightarrow0$, we obtain $\beta (T+\rho)\leq 0$, which contradicts our assumption $\beta>0$. And this proves the theorem.
\end{proof}
From Theorem \ref{comparison}, we can get the uniqueness of the HJB equation \eqref{HJB} in viscosity sense.

\bibliographystyle{siam}
 
\end{document}